\renewcommand{\leq}{\leqslant}
\renewcommand{\geq}{\geqslant}
\newcommand{\Z}{\mathbb{Z}}
\newcommand{\R}{\mathbb{R}}
\newcommand{\C}{\mathbb{C}}
\newtheorem{theorem}{Theorem}[section]
\newtheorem{proposition}[theorem]{Proposition}
\newcounter{mtheorem}
\newtheorem{mtheorem}[mtheorem]{Theorem}
\numberwithin{equation}{section}
\theoremstyle{definition}
\newtheorem{rk}[theorem]{Remark}
\newtheorem{ex}[theorem]{Example}
\newtheorem{step}{Step}
\begin{document}

\title{A Liouville theorem for the complex \\ Monge-Amp\`ere equation on product manifolds}
\begin{abstract}
Let $Y$ be a closed Calabi-Yau manifold. Let $\omega$ be the K\"ahler form of a Ricci-flat K\"ahler metric on $\C^m \times Y$. We prove that if $\omega$ is uniformly bounded above and below by constant multiples of  $\omega_{\C^m} + \omega_Y$, where $\omega_{\C^m}$ is the standard flat K\"ahler form on $\C^m$ and $\omega_Y$ is any K\"ahler form on $Y$, then $\omega$ is actually equal to a product K\"ahler form, up to a certain automorphism of $\C^m \times Y$.
\end{abstract}
\author{Hans-Joachim Hein}
\address{Department of Mathematics, Fordham University, Bronx, NY 10458, USA}
\email{hhein@fordham.edu}
\date{\today}
\thanks{Research partially supported by NSF grant DMS-1514709.}
\maketitle
\thispagestyle{empty}
\markboth{A Liouville theorem for the complex Monge-Amp\`ere equation on product manifolds}{Hans-Joachim Hein}

\section{Introduction}\label{s:intro}

The purpose of this article is to prove the following theorem. 

\begin{mtheorem}\label{liouville} Let $Y$ be a closed K\"ahler manifold with $c_1(Y)= 0$ in $H^2(Y,\R)$. Let $\omega$ be a Ricci-flat K\"ahler form on $\C^m \times Y$. Assume that $C^{-1}(\omega_{\C^m} + \omega_Y) \leq \omega \leq C(\omega_{\C^m} + \omega_Y)$ for some $C \geq 1$, where $\omega_{\C^m}$ is the standard flat K\"ahler form on $\C^m$ and $\omega_Y$ is any K\"ahler form on $Y$. Then:

{\rm (1)} There is a unique choice of $\omega_Y$ such that ${\rm Ric}(\omega_Y) = 0$ and $\omega$ is $d$-cohomologous to $\omega_{\C^m} + \omega_Y$. Fix this choice of $\omega_Y$. Then $\omega^{m+n}$ is a constant multiple of $\omega_{\C^m}^m \wedge \omega_Y^n$, where $n = \dim_\C Y$.  

{\rm (2)} There is a unique complex linear map $\ell: \C^m \to H^{0,1}(Y)$ such that $T_\ell^*\omega$ is $i\partial\bar\partial$-cohomologous to $\omega_{\C^m} + \omega_Y$, where $T_\ell$ denotes the automorphism of $\C^m \times Y$ defined in \eqref{e:trans} below.

{\rm (3)} There is a matrix $S \in {\rm GL}(m,\C)$ such that $T_\ell^*\omega = S^*\omega_{\C^m} + \omega_Y $.
\end{mtheorem}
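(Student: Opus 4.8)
The strategy is to deduce (3) from (1), (2) and a Liouville property for Ricci-flat K\"ahler metrics uniformly equivalent to $\omega_0:=\omega_{\C^m}+\omega_Y$. By (2), after replacing $\omega$ with $T_\ell^*\omega$ we may assume $\omega=\omega_{\C^m}+\omega_Y+\ddbar\varphi$ for a global function $\varphi$ on $\C^m\times Y$, where $\omega_Y$ is the Ricci-flat metric fixed in (1). Since $T_\ell$ is the identity on the $\C^m$-factor and acts on each slice $\{z\}\times Y$ through $\mathrm{Aut}^0(Y)$, whose Lie algebra is identified with $H^{0,1}(Y)$, and since holomorphic vector fields on the compact Ricci-flat manifold $Y$ are parallel (hence Killing), $\mathrm{Aut}^0(Y)$ consists of isometries of $\omega_Y$; therefore $T_\ell$ preserves $\omega_{\C^m}^m\wedge\omega_Y^n$ and the Ricci-flat condition, so by (1) the function $\varphi$ solves a complex Monge--Amp\`ere equation $\det(g_{\C^m}+g_Y+\partial\bar\partial\varphi)=\mathrm{const}\cdot\det g_Y$, and $\omega$ remains uniformly equivalent to $\omega_0$.

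Next, a priori estimates. As $\omega_0$ is the Riemannian product of the flat metric on $\C^m$ and a fixed metric on the compact manifold $Y$, it has bounded geometry; since $\omega$ is uniformly equivalent to $\omega_0$ and solves the above uniformly elliptic equation with smooth, uniformly controlled right-hand side, the Evans--Krylov and Schauder interior estimates yield uniform $C^k$ bounds for $\omega$ on every $\omega_0$-unit ball, for all $k$. Thus $(\C^m\times Y,\omega)$ is complete, Ricci-flat, of bounded geometry, and (being Ricci-flat) real-analytic.

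The core is the following Liouville step. The holomorphic coordinate vector fields $\partial/\partial z^1,\dots,\partial/\partial z^m$ are globally defined and holomorphic, and the Bochner formula on a Ricci-flat K\"ahler manifold gives $\Delta_\omega\,|\partial/\partial z^k|^2_\omega=|\nabla(\partial/\partial z^k)|^2_\omega\ge 0$; so each $|\partial/\partial z^k|^2_\omega$ (the $(k,\bar k)$-entry of the metric tensor of $\omega$) is $\omega$-subharmonic and, by the uniform equivalence, bounded between positive constants. \emph{The crux of the argument, and what I expect to be the principal difficulty, is the Liouville statement that every such function is constant, equivalently that the $\partial/\partial z^k$ are $\omega$-parallel.} For $m=1$ this is immediate: $(\C\times Y,\omega)$ is bi-Lipschitz to $\C\times Y$, so it has quadratic volume growth and is parabolic, and bounded subharmonic functions on a parabolic manifold are constant. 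For $m\ge 2$ the manifold is non-parabolic -- and general complete Ricci-flat manifolds do carry non-constant bounded subharmonic functions -- so the curvature sign alone is insufficient: the proof must exploit the precise asymptotic product structure (heat-kernel and volume estimates inherited from the quasi-isometry to $\C^m\times Y$), the extra rigidity of the subharmonic function at hand (whose Laplacian is $|\nabla(\partial/\partial z^k)|^2$), and the real-analyticity of $\omega$. (Running the heat flow one sees that $t\mapsto(e^{t\Delta_\omega}|\partial/\partial z^k|^2_\omega)(x)$ is nondecreasing with constant limit $\sup|\partial/\partial z^k|^2_\omega$, so $\int_0^\infty e^{t\Delta_\omega}|\nabla(\partial/\partial z^k)|^2_\omega\,dt$ is a bounded Green's potential; alternatively one may blow down, $z\mapsto\lambda z$, $\omega\mapsto\lambda^{-2}\omega$, so that the $Y$-factor collapses and the limit is a Ricci-flat K\"ahler metric on $\C^m$ uniformly equivalent to $\omega_{\C^m}$, hence $S^*\omega_{\C^m}$ by the classical Liouville theorem for the complex Monge--Amp\`ere equation, and then upgrade this to $\omega$ itself.) Granting the Liouville step, $\nabla(\partial/\partial z^k)\equiv0$ for $k=1,\dots,m$.

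It remains to assemble the conclusion. With the $\partial/\partial z^k$ parallel, their real and imaginary parts are parallel Killing fields, so $\omega$ is invariant under all translations of $\C^m$; the $\omega$-orthogonal complement of the $\C^m$-directions is then a parallel, integrable, $J$-invariant distribution, hence encoded by an $m$-tuple of $d$-closed $(1,0)$-forms on $Y$, i.e.\ holomorphic $1$-forms $\theta^1,\dots,\theta^m\in H^0(Y,\Omega^1_Y)$. If some $\theta^k\ne0$, then $\omega-\omega_0$ would contain a mixed $(1,1)$-term whose fibrewise $(0,1)$-component is the nonzero, $\bar\partial$-closed but non-exact form $\overline{\theta^{\,k}}$ on $Y$, which is incompatible with $\omega-\omega_0=\ddbar\varphi$ for a global $\varphi$; hence all $\theta^k=0$ and $\omega$ has no mixed components. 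Being closed and translation-invariant, $\omega$ then splits as a Riemannian product of a flat K\"ahler metric on $\C^m$ with a K\"ahler metric $g_Y'$ on $Y$; the flat factor is translation-invariant with constant positive Hermitian matrix $A$, and $g_Y'$ lies in $[\omega]=[\omega_Y]\in H^2(Y,\R)$. Ricci-flatness of the product forces $\Ric(g_Y')=0$, so $g_Y'=\omega_Y$ by Yau's uniqueness theorem, while writing $A=S^*S$ with $S\in\mathrm{GL}(m,\C)$ (unique up to $\mathrm{U}(m)$) gives the flat factor as $S^*\omega_{\C^m}$. Therefore $\omega=S^*\omega_{\C^m}+\omega_Y$, i.e.\ $T_\ell^*\omega=S^*\omega_{\C^m}+\omega_Y$, which is (3).
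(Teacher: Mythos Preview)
Your outline correctly identifies the overall architecture and even the precise difficulty: for $m\ge 2$ the manifold is non-parabolic, so the subharmonicity of $|\partial_{z^k}|_\omega^2$ alone cannot force constancy. But you do not actually close this gap; you write ``Granting the Liouville step'' and proceed. Neither of your two suggested mechanisms works as stated. The heat-flow claim that $e^{t\Delta_\omega}|\partial_{z^k}|_\omega^2$ increases to the global supremum is unjustified on a non-parabolic manifold (think of $\max\{0,1-|z|^{2-2m}\}$ on $\C^m$: its heat evolution stays bounded away from $1$ at the origin). The blow-down idea is the right start, but ``upgrade this to $\omega$ itself'' is exactly the content of the theorem and you give no mechanism for it.

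The paper's resolution is an idea your proposal does not contain: use not only $|\partial_{z^k}|_\omega^2$ but also the \emph{dual} quantity $\sum_{j=1}^m |dz^j|_\omega^2 = 2\,{\rm tr}_\omega(\omega_{\C^m})$, which is again $\omega$-subharmonic (now from ${\rm Ric}\ge 0$). After the blow-down and collapsing analysis one shows $|\partial_{z^k}|_\omega^2 \le \tfrac12$ and $\sum_j |dz^j|_\omega^2 \le 2m$ globally; then the chain
\[
m^2 = \Bigl(\sum_j dz^j(\partial_{z^j})\Bigr)^2 \le \Bigl(\sum_j |dz^j|_\omega^2\Bigr)\Bigl(\sum_j |\partial_{z^j}|_\omega^2\Bigr) \le 2m\cdot \tfrac{m}{2} = m^2
\]
forces equality everywhere, hence $\nabla^\omega\partial_{z^j}=0$ and $\nabla^\omega dz^j=0$. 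Obtaining the upper bound for the trace is itself delicate: the cotangent metric blows up in the fiber directions under collapse, so one must first prove weak convergence of $\sum_j |dz^j|_{\omega_{t_i}}^2$ against \emph{base} test functions by an integration-by-parts exploiting $i\partial\bar\partial$-exactness (this is where Step~(2) is genuinely used), then promote weak to $L^1_{loc}$ and to pointwise upper bounds via Li--Yau heat-kernel estimates. Your final splitting argument is in the right spirit, but note that the paper uses $\nabla^\omega dz^j=0$ (not just $\nabla^\omega\partial_{z^j}=0$) to see that the fibers $\{z\}\times Y$ are $\omega$-totally geodesic before invoking Yau uniqueness.
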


Here (1) is obvious from Yau's theorem \cite{Yau} and the fact that $\log(\omega^{m+n}/\omega_{\C^m}^m \wedge \omega_Y^n)$ is a bounded pluriharmonic function on $\C^m \times Y$, hence on $\C^m$, if ${\rm Ric}(\omega_Y) = 0$. For (2), we identify $H^{0,1}(Y)$ with the space of parallel $(0,1)$-forms with respect to the Ricci-flat metric $\omega_Y$ \cite[Thm 6.11]{ball}. Then for all $\xi \in H^{0,1}(Y)$ we have a unique $\omega_Y$-parallel $(1,0)$-vector field $\xi^\sharp$ with $\xi^\sharp \,\lrcorner\,\omega_Y = \xi$. The map $\xi \mapsto \xi^\sharp$ is complex linear. Formally write the time-$1$ map of $\xi^\sharp$ on $Y$ as  $y \mapsto y + \xi^\sharp$ and define
\begin{align}\label{e:trans}
T_\ell(z,y) = (z, y + \ell(z)^\sharp).
\end{align}
Thus, (2) is trivial if $Y$ is a point. In this case, (3) reduces to a classical Liouville theorem for the complex Monge-Amp\`ere equation $\det(\phi_{z^i\bar{z}^j}) = 1$ for a plurisubharmonic function $\phi:\C^m\to\R$ \cite{RS}. This Liouville theorem is a consequence of the Calabi-Yau $C^3$ estimate \cite{Cal,Yau}. If $Y$ is not a point but is \emph{flat}, then this classical method still shows that $\omega$ is flat, and (2) and (3) follow from this. If $Y$ is \emph{not} flat, then this method breaks down due to the appearance of negative curvature terms in a Weitzenb\"ock type computation. So one way to think of Theorem \ref{liouville} is as a ``vanishing theorem'' for ``nonlinear harmonic $2$-forms'' that cannot be proved using the Weitzenb\"ock method.

It is a standard theme in analysis that conversely, Liouville theorems imply regularity theorems, e.g.~see \cite{CW}. The special case of Theorem \ref{liouville} where $\ell = 0$ and $\omega|_{\{z\}\times Y} = \omega_Y$ for all $z \in \C^m$ (a very strong assumption, which quickly reduces the claim to the classical case $n = 0$) was used in \cite{TZ} to establish higher regularity properties of collapsing Ricci-flat K\"ahler metrics and K\"ahler-Ricci flows on holomorphic fiber spaces with generic fiber $Y$. In \cite{TZ}, the assumption that $\omega|_{\{z\}\times Y} = \omega_Y$ for all $z$ was justified by the deep technical results of \cite{TWY}. It is thus worth noting that our work here does not rely on \cite{TWY}. New applications of Theorem \ref{liouville} to regularity problems will appear in \cite{HT}.

Here is a sketch of our proof of Theorem \ref{liouville}. For any constant vector field $\partial_{z}$ on $\C^m$, the function $|\partial_z|_\omega^2$ is $\omega$-subharmonic, and is constant if and only $\partial_z$ is $\omega$-parallel, thanks to the Bochner formula for the $\bar\partial$-operator on vector fields \cite[Prop 4.79]{ball} and to the fact that ${\rm Ric}(\omega) \leq 0$. Motivated by \cite[\S 2]{litam}, V.~Tosatti and Y.~Zhang (unpublished) observed that if $m = 1$, then this idea suffices to prove Theorem \ref{liouville} because a bounded subharmonic function on a parabolic manifold is constant.

To improve this strategy, we will first use the standard case $n = 0$ of Theorem \ref{liouville} to prove that the tangent cone at infinity of $(\C^m \times Y, \omega)$ is $(\C^m, \omega_{\C^m})$. Estimates for subharmonic functions then allow us to prove that $|\partial_z|_\omega^2$ is $\leq \frac{1}{2}$ globally (and is asymptotic to $\frac{1}{2}$ along almost every ray in $\C^m$). For $m > 1$ this is still consistent with the behavior of nonconstant bounded subharmonic functions such as $\max\{0,1-|z|^{2-2m}\}$. However, it turns out that essentially the same conclusions hold for the dual quantity $\sum_{j=1}^m |dz^j|_\omega^2 = 2{\rm tr}_{\omega}(\omega_{\C^m}) \leq 2m$, which is $\omega$-subharmonic because of the reverse inequality ${\rm Ric}(\omega) \geq 0$ \cite[Ex 1.38]{ball}. (The details are more involved because the metric on the cotangent bundle diverges to infinity in the $Y$-directions when we blow down.) Then it suffices to note  that
\begin{align}\label{cs}
m^2 = \biggl(\sum_{j=1}^m dz^j(\partial_{z^j})\biggr)^2 \leq \biggl(\sum_{j=1}^m |dz^j|_\omega \cdot  |\partial_{z^j}|_\omega\biggr)^2 \leq \biggl(\sum_{j=1}^m |dz^j|_\omega^2\biggr)\biggl(\sum_{j=1}^m |\partial_{z^j}|_\omega^2\biggr) \leq m^2.
\end{align}
This forces equality in our Bochner formulas, so that $\nabla^\omega\partial_{z^j} = 0$ and  $\nabla^\omega dz^j = 0$ for all $j$. Moreover, $|dz^j|_\omega^2 = \lambda|\partial_{z^j}|_\omega^2 = \frac{\lambda}{2}$ for some $\lambda \geq 0$, and hence $|dz^j|_\omega^2 = 2$ because $\sum_{j=1}^m |dz^j|_\omega^2 = 2m$.

The remainder of this paper is organized as follows. In Section \ref{s:outline} we give a more detailed outline of the proof of Theorem \ref{liouville} as a sequence of six steps. Some of these steps are obvious or follow from the literature. We provide technical details for the remaining ones in Section \ref{s:details}.

I would like to thank Bianca Santoro and Valentino Tosatti for very useful discussions.

\section{Outline}\label{s:outline}

\begin{step}\label{1}
Form the $1$-parameter family $\omega_t = \Psi_t^*(e^{-t}\omega)$ of Ricci-flat K\"ahler metrics on $\C^m \times Y$, where $\Psi_t(z,y) = (e^{t/2}z,y)$. As $t \to \infty$ this family connects $(\C^m\times Y, \omega)$ to its tangent cone at infinity. By assumption, $\omega_t$ is uniformly bounded above and below by fixed multiples of the Ricci-flat product metric $\omega_{\C^m} + e^{-t}\omega_Y$, and $\omega_t^{m+n}$ is equal to a fixed multiple of $e^{-nt}\omega_{\C^m}^m \wedge \omega_Y^n$. 
\end{step}

\begin{step}\label{2}
Use ideas from \cite{T} to find a subsequence $\omega_{t_i}$ that converges in the $C^{1,\alpha}_{loc}$ topology of K\"ahler potentials and in the weak topology of currents on $\C^m \times Y$ to ${\rm pr}_{\C^m}^*(\omega_\infty)$, where $\omega_{\infty}$ is a Bedford-Taylor solution to a complex Monge-Amp\`ere equation $\omega_\infty^m = c \omega_{\C^m}^m$ on $\C^m$ (here $c > 0$ is a constant). Since our set-up differs slightly from the one of \cite{T}, some preliminary steps are required.
\end{step}

\subsubsection*{Step 2.1} Prove that $\omega$ differs from $\omega_{\C^m} + \omega_Y$ by $i\partial\bar\partial$ of some global K\"ahler potential on $\C^m \times Y$, after ``translating'' $\omega$ by some automorphism $T_\ell$ as in \eqref{e:trans} if necessary (which does not affect the uniform equivalence of the two K\"ahler forms). In fact this is part of the content of Theorem \ref{liouville}(2). From this step on, we will assume throughout the whole paper that $T_\ell = {\rm id}$.

\subsubsection*{Step 2.2} Prove that on $B_1 \times Y$, where $B_1 = B_1(0)$ is the standard unit ball in $\C^m$, the Ricci-flat metric $\omega_t$ differs from the model metric $\omega_{\C^m} + e^{-t}\omega_Y$ by $i\partial\bar\partial \phi_t$ with $|\phi_t| \leq C$. 

\subsubsection*{Step 2.3} Use ideas from \cite{T} to extract a sequence $\omega_{t_i}$ as described above. Thanks to Step \ref{2}.2, this is a direct adaptation (and simplification) of the proof of \cite[Thm 4.1]{T}, so we omit the details. We only remark that unlike in \cite{T}, it is not obvious at this point that the limit $\omega_\infty$ will be independent of the sequence $t_i$, but for us it suffices to have convergence along one fixed sequence.

\begin{step}\label{3}
By construction, the Bedford-Taylor solution $\omega_\infty$ to $\omega_\infty^m = c\omega_{\C^m}^m$ produced as a weak limit of a collapsing sequence $\omega_{t_i}$ in Step \ref{2} satisfies $C^{-1}\omega_{\C^m} \leq \omega_\infty \leq C \omega_{\C^m}$ in the sense of currents on $\C^m$. By \cite[Thm 1.7]{W} (see \cite[Prop 3.13]{W} for the fact that the notions of a Bedford-Taylor and a viscosity solution are equivalent), whose proof made extensive use of Caffarelli's work as in \cite{CC}, $\omega_\infty$ is then H\"older continuous and hence, by standard bootstrapping arguments, smooth and uniformly equivalent to $\omega_{\C^m}$. The standard Liouville theorem for the complex Monge-Amp{\`e}re equation of \cite{RS} now tells us that $\omega_\infty = S^*\omega_{\C^m}$ for some matrix $S \in {\rm GL}(m,\C)$. Thus, $\phi_{t_i} \to (S^*-{\rm id}^*)(|z|^2/2) + \pi$ in $C^{1,\alpha}_{loc}(\C^m\times Y)$ for some pluriharmonic function $\pi$ on $\C^m$. Replacing $\omega_t$ by $(S^{-1})^*(\omega_t)$ and $\phi_{t}$ by $(S^{-1})^*(\phi_{t}-\pi) + ((S^{-1})^* - {\rm id}^*)(|z|^2/2)$, we may then assume throughout the rest of the paper that $S = {\rm id}$, and in fact $\omega_{t} = (\omega_{\C^m} + e^{-t}\omega_Y) + i\partial\bar\partial\phi_t$ with $\phi_{t_i} \to 0$ in $C^{1,\alpha}_{loc}(\C^m \times Y)$.
\end{step}

\begin{step}\label{4}
It follows trivially from Step \ref{3} that for any constant holomorphic vector field $\partial_z$ on $\C^m$, the component function $|\partial_z|$\begin{footnotesize}$_{\omega_{t_i}}^2$\end{footnotesize} converges to the constant $\frac{1}{2}$ in the sense of distributions on $\C^m \times Y$. On the other hand, the dual quantity $|dz|$\begin{footnotesize}$_{\omega_{t_i}}^2$\end{footnotesize}, while uniformly bounded, does not obviously converge to anything (even in the sense of distributions) because the metric on the cotangent bundle induced by $\omega_t$ diverges in the fiber directions. To fix this problem, we will prove that the {\bf sum} $\sum_{j=1}^m |dz^j|$\begin{footnotesize}$_{\omega_{t_i}}^2$\end{footnotesize} converges to $2m$ in the sense of distributions, {\bf provided} that we restrict ourselves to test functions pulled back from the base. This strongly uses the convergence $\phi_{t_i}\to 0$ at the potential level.
\end{step}

\begin{step}\label{5} We now promote the distributional convergence $|\partial_z|$\begin{footnotesize}$_{\omega_{t_i}}^2$\end{footnotesize}$\to\frac{1}{2}$ and $\sum_{j=1}^m |dz^j|$\begin{footnotesize}$_{\omega_{t_i}}^2$\end{footnotesize}$\to 2m$ of Step \ref{4} to stronger forms of convergence, using the fact that both of these functions are $\omega_{t_i}$-subharmonic (which has not played any role so far). To this end, we first adapt a standard property of sequences of subharmonic functions on a ball in Euclidean space \cite[Thm 3.2.12]{H} to prove that:
\end{step}

\subsubsection*{Step 5.1} We have $|\partial_z|$\begin{footnotesize}$_{\omega_{t_i}}^2$\end{footnotesize}$\to\frac{1}{2}$ and $\sum_{j=1}^m |dz^j|$\begin{footnotesize}$_{\omega_{t_i}}^2$\end{footnotesize}$\to 2m$ in $L^1_{loc}(\C^m \times Y)$. This proof also relies directly on the fact that ${\rm Ric}(\omega_{t_i}) \geq 0$ via an application of Li-Yau type heat kernel estimates.

\vskip2mm

Using subharmonicity once again gives us pointwise bounds. This is similar to \cite[Thm 3.2.13]{H}, but as in Step \ref{5}.1 we need to replace the Euclidean methods of \cite{H} by a heat kernel estimate.

\subsubsection*{Step 5.2} We have $\sup_{B_1 \times Y} |\partial_z|$\begin{footnotesize}$_{\omega_{t_i}}^2$\end{footnotesize}$\leq \frac{1}{2} + o(1)$ and $\sup_{B_1 \times Y} \sum_{j=1}^m |dz^j|$\begin{footnotesize}$_{\omega_{t_i}}^2$\end{footnotesize}$\leq 2m + o(1)$ as $i \to \infty$. 

\vskip2mm

The second statement is similar to a trace estimate \cite[Lemma 4.7]{TWY} that played a crucial role in \cite{TWY} in a different setting and was quite difficult to prove there (using different methods).

\begin{step}\label{6}
Undoing the scaling and stretching of Step \ref{1}, the result of Step \ref{5} says that on $(\C^m \times Y,\omega)$, the subharmonic functions $|\partial_z|_\omega^2$ and $\sum_{j=1}^m |dz^j|_\omega^2$ are $\leq \frac{1}{2}$ and $\leq 2m$ globally (and are asymptotic to these constants along almost every ray in $\C^m$). Combining this with \eqref{cs}, we deduce that they are constant equal to $\frac{1}{2}$ and $2m$, respectively. The equality case in the Bochner formula now shows that $\partial_{z^j}$ and $dz^j$ are $\omega$-parallel for all $j$. It follows from a Riemannian argument, together with the fact that $\omega$ is $i\partial\bar\partial$-cohomologous to $\omega_{\C^m} + \omega_Y$, that $\omega = \omega_{\C^m} + \omega_Y$, proving Theorem \ref{liouville}(3).
\end{step}

\section{Proofs}\label{s:details}

Step \ref{1} is self-explanatory, and details for Steps \ref{2}.3 and \ref{3} are available from the literature. Thus, it remains to prove the claims made in Steps \ref{2}.1, \ref{2}.2, and \ref{4}, \ref{5}, \ref{6}. All notation and conventions of
Sections \ref{s:intro}--\ref{s:outline} will remain in force throughout this section. Because of this, some results actually hold in slightly greater generality than stated. For example, in Proposition \ref{p:translate}, the $\omega_{\C^m}$ term and the fact that ${\rm Ric}(\omega) = 0$ play no role, but our standing assumptions that $\omega - (\omega_{\C^m} + \omega_Y)$ is $d$-exact (by the conventions of Theorem \ref{liouville}(1)) and that ${\rm Ric}(\omega_Y) = 0$ are crucial (although the latter could be relaxed to the conclusion of the Beauville-Bogomolov-Calabi decomposition theorem).

\subsection{K\"ahler potentials} The following propositions deal with Steps \ref{2}.1 and \ref{2}.2, respectively.

\begin{proposition}\label{p:translate}
There exists a unique complex linear map $\ell: \C^m \to H^{0,1}(Y)$ such that $T_\ell^*\omega$ is $i\partial\bar\partial$-cohomologous to $\omega_{\C^m} + \omega_Y$, where $T_\ell$ denotes the automorphism of $\C^m \times Y$ defined in \eqref{e:trans}.
\end{proposition}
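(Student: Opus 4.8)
The plan is to reduce the statement to a cohomological computation on $Y$ parametrized holomorphically by $z \in \C^m$, and then to use the pluriharmonicity of bounded functions on $\C^m$ to show that this $z$-dependence is affine. Concretely, write $\omega = \omega_{\C^m} + \omega_Y + d\eta$ for some real $1$-form $\eta$ (possible by Theorem \ref{liouville}(1), which places $\omega$ in the $d$-cohomology class of the product). Decomposing into types, $\omega - (\omega_{\C^m}+\omega_Y)$ is a $d$-exact real $(1,1)$-form, so by the $\partial\bar\partial$-lemma applied \emph{fiberwise} — for each fixed $z$, the restriction $\omega|_{\{z\}\times Y} - \omega_Y$ is $d$-exact on the compact K\"ahler manifold $Y$, hence equals $i\partial_Y\bar\partial_Y f(z,\cdot)$ for a unique function $f(z,\cdot)$ with $\int_Y f(z,\cdot)\,\omega_Y^n = 0$ — one extracts a smooth family of fiberwise potentials. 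The obstruction to $\omega - (\omega_{\C^m}+\omega_Y)$ being globally $i\partial\bar\partial$-exact on $\C^m\times Y$ is then measured by the mixed components of $d\eta$, i.e. by a $\bar\partial$-closed $(0,1)$-form on $Y$ depending on $z$, which represents a class in $H^{0,1}(Y)$; call this $\bar\partial$-cohomology class $\ell(z)$. The first task is to show $z \mapsto \ell(z)$ is well defined, holomorphic, and that $\omega$ is $i\partial\bar\partial$-cohomologous to the product if and only if $\ell \equiv 0$ after translating.

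Next I would analyze how $T_\ell$ acts. Since $\xi^\sharp$ is $\omega_Y$-parallel and holomorphic, $T_\ell$ is a biholomorphism of $\C^m\times Y$ for any complex linear $\ell$, and one computes $T_\ell^*(\omega_{\C^m}+\omega_Y)$ directly: the pullback differs from $\omega_{\C^m}+\omega_Y$ by terms built from $d(\ell(z)^\sharp \lrcorner \omega_Y) = d\ell(z)$ paired against $dz$, which precisely shifts the class $\ell$ attached to $\omega$ by the class of $d\ell$. Since $\ell: \C^m \to H^{0,1}(Y)$ is linear, its "primitive" in the relevant sense exists, and one checks that $T_\ell^*\omega$ has trivial mixed class, i.e. is $i\partial\bar\partial$-cohomologous to the product, for exactly one linear $\ell$ — this gives both existence and uniqueness simultaneously. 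The identification of $H^{0,1}(Y)$ with parallel $(0,1)$-forms (via Hodge theory on the Ricci-flat $Y$, \cite[Thm 6.11]{ball}) is what makes $\xi^\sharp$ parallel and hence $T_\ell$ an isometry in the $Y$-directions, which keeps the uniform equivalence of metrics intact as noted in Step \ref{2}.1.

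The heart of the matter is showing that the naturally-defined $\ell(z)$ is in fact \emph{complex linear} (affine with $\ell(0)$ absorbable), rather than an arbitrary holomorphic map $\C^m \to H^{0,1}(Y)$. This is where the metric bound $C^{-1}(\omega_{\C^m}+\omega_Y)\le\omega\le C(\omega_{\C^m}+\omega_Y)$ enters: it forces the fiberwise potential $f(z,\cdot)$ and the mixed $1$-form representing $\ell(z)$ to be uniformly bounded in $z$ (in suitable norms on $Y$), and then each coordinate of $\ell$ with respect to a basis of $H^{0,1}(Y)$ is a bounded holomorphic function on $\C^m$, hence constant by Liouville — wait, that would give $\ell$ constant, not linear; so the correct statement is that the \emph{potential-level data} is controlled well enough that $\bar\partial_{\C^m}$ of the appropriate quantity is bounded pluriharmonic and therefore the $z$-dependence of the mixed form is forced to be linear in $z$ (constant $\bar\partial$-derivative). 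Making this precise — choosing the right gauge for $\eta$ so that the relevant function on $\C^m$ is genuinely bounded, and then invoking the Liouville theorem for bounded (pluri)harmonic functions on $\C^m$ — is the main obstacle, and it is exactly the point at which the global boundedness hypothesis on $\omega$ is indispensable. I expect uniqueness to be the easy half (two choices would differ by a linear $\ell$ with $d\ell$ exact in $H^{0,1}(Y)$, forcing $\ell = 0$), and existence plus linearity to require the careful gauge-fixing just described.
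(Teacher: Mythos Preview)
Your outline has the right shape --- write $\omega - (\omega_{\C^m}+\omega_Y) = d\xi$, extract from $\xi^{0,1}$ a family of classes $f(z) \in H^{0,1}(Y)$, show $f$ is linear, and then undo it with $T_\ell$ --- but the key step is missing, and your own ``wait'' flags exactly where.

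The primitive $\xi$ is \emph{not} bounded in any obvious way, so neither is $f(z) = [\xi^{0,1}|_{\{z\}\times Y}]$; the metric hypothesis bounds $\zeta = \omega - (\omega_{\C^m}+\omega_Y)$, not its primitive. What the paper actually proves is a formula for the \emph{derivative}:
\[
(\partial_{z^j} f)(z) = \text{the } \omega_Y\text{-parallel part of } (\partial_{z^j}\,\lrcorner\,\zeta)\big|_{\{z\}\times Y}.
\]
The right-hand side involves only $\zeta$, not $\xi$, and is therefore uniformly bounded by the metric hypothesis. Since $f$ is holomorphic (this also needs an argument --- the paper gets it from the Leray identification $H^{0,1}(\C^m\times Y)\cong \mathcal{O}(\C^m,H^{0,1}(Y))$ following Gross--Wilson), each $\partial_{z^j}f$ is a bounded entire function, hence constant, so $f$ is affine. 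Adjusting $\xi$ by a parallel form kills $f(0)$, leaving $f$ linear. Your phrase ``potential-level data is controlled well enough that $\bar\partial_{\C^m}$ of the appropriate quantity is bounded'' is precisely the step that has to be made concrete, and the formula above is what does it; without it there is no way to pass from a bound on $\omega$ to a bound on any derivative of $f$.

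Your treatment of $T_\ell$ is roughly right in spirit. The paper makes it explicit by writing $\tilde\zeta_\ell = \omega_P - T_{-\ell}^*\omega_P = d\tilde\xi$ with $\tilde\xi = \ell^\sharp\,\lrcorner\int_0^1 T_{-t\ell}^*\omega_P\,dt$, checking directly that $\Phi[\tilde\xi^{0,1}] = \ell$, and then observing that $T_\ell^*\omega - \omega_P$ is $i\partial\bar\partial$-exact iff the obstruction class of $\zeta + \tilde\zeta_\ell$ vanishes, i.e.\ iff $\ell = -f$. This gives uniqueness and existence at once, as you anticipated.
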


\begin{proof}
During this proof we will abbreviate  $ \omega_{\C^m} + \omega_Y = \omega_P$ and $g_{\C^m} + g_Y = g_P$.

By assumption $\zeta = \omega - \omega_P$ is exact. Let $\xi$ be any real $1$-form with $d\xi = \zeta$. Then $\bar\partial \xi^{0,1} = 0$, so $\xi^{0,1}$ defines a class in $H^{0,1}(\C^m \times Y)$. By the Leray spectral sequence of the projection $\C^m \times Y \to \C^m$, there is an isomorphism $\Phi: H^{0,1}(\C^m \times Y) \to \mathcal{O}(\C^m, H^{0,1}(Y))$ with $\Phi[\xi^{0,1}](z) = [\xi^{0,1}|_{\{z\}\times Y}]$. This is made more explicit in the proof of \cite[Lemma 4.1]{GW} if $\dim_\C Y = 1$. The Beauville-Bogomolov-Calabi decomposition theorem \cite[Thm 6.6]{ball} allows us to generalize the computations of \cite{GW} to our setting. Writing $\Phi[\xi^{0,1}] = f$ and identifying $H^{0,1}(Y)$ with the space of $g_Y$-parallel $(0,1)$-forms, $(\partial_{\bar{z}^j}f)(z) = 0$ by \cite[p.515]{GW}, and similar arguments then show that $(\partial_{z^j}f)(z)$ is the $H^{0,1}$-class, or the parallel part with respect to $g_Y$, of the $(0,1)$-form $(\partial_{z^j} \,\lrcorner\,\zeta)|_{\{z\}\times Y}$ (independent of our choice of $\xi$). Since this form is uniformly bounded with respect to $g_P$, we learn that $f$ is a holomorphic polynomial of degree $\leq 1$. Changing $\xi$ by a suitable $g_P$-parallel form, we may assume that $f(0) = 0$.

Writing $T_\ell^*\omega - \omega_P = T_\ell^*(\zeta + \tilde{\zeta}_\ell)$ with $\tilde\zeta_\ell = \omega_P - T_{-\ell}^*\omega_P$, and arguing as in the usual proof of the $i\partial\bar\partial$-lemma, one now easily checks that our proposition is equivalent to the following claim.\medskip\

\noindent \emph{Claim}. There exists a unique complex linear map $\ell: \C^m \to H^{0,1}(Y)$ such that there exists a real $1$- form $\tilde\xi$ such that $d\tilde\xi = \tilde\zeta_\ell$ and $\Phi[\tilde\xi^{0,1}] = -f$.\medskip\

To prove uniqueness, notice that by the same reasoning as above, $\Phi[\tilde\xi^{0,1}]$ must be a holomorphic polynomial of degree $\leq 1$ with $1$-homogeneous part equal to $\ell$, so that necessarily $\ell = -f$. 
To prove that this choice of $\ell$ works, we first compute as in \cite[p.382]{Hein} that $\tilde\zeta_\ell = d\tilde\xi$ with 
$$\tilde\xi = \ell^\sharp \,\lrcorner \int_0^1 T_{-t\ell}^*\omega_P \, dt.$$
Then it is easy to see that this choice of $\tilde\xi$ satisfies $\Phi[\tilde{\xi}^{0,1}] = \ell = -f$ as desired.
\end{proof}

In the following proposition and in its proof, we fix $\omega_{\C^m}$ and $\omega_Y$ as reference metrics on $\C^m$ and $Y$. Also note that we will apply this proposition to $\zeta = \zeta_t =  \omega_t - (\omega_{\C^m} + e^{-t}\omega_Y)$.

\begin{proposition}
There is a constant $C$ such that if $\zeta$ is an $i\partial\bar\partial$-exact $(1,1)$-form on $B_2 \times Y$, then there is a function $\phi$ on $B_2 \times Y$ with $i\partial\bar\partial\phi = \zeta$ and $\|\phi\|_{L^\infty(B_1\times Y)} \leq C\|\zeta\|_{L^\infty(B_2 \times Y)}$.
\end{proposition}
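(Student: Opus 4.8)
The plan is to produce the potential $\phi$ by a two-stage construction. First I would solve the problem fiberwise on $Y$: for each fixed $z \in B_2$, the form $\zeta|_{\{z\}\times Y}$ is $i\partial\bar\partial$-exact on the compact Kähler manifold $(Y,\omega_Y)$, so the Hodge-theoretic solution operator for $i\partial\bar\partial$ on $Y$ (equivalently, the Green operator composition that inverts $i\partial\bar\partial$ on $i\partial\bar\partial$-exact $(1,1)$-forms and produces the unique solution $L^2$-orthogonal to the $\bar\partial$-harmonic functions, i.e.\ to the constants) yields a function $\psi(z,\cdot)$ on $Y$ with $\int_Y \psi(z,\cdot)\,\omega_Y^n = 0$ and $i\partial\bar\partial_Y \psi(z,\cdot) = \zeta|_{\{z\}\times Y}$. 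Elliptic estimates on the fixed compact manifold $Y$ give $\|\psi(z,\cdot)\|_{C^{2,\alpha}(Y)} \leq C\|\zeta|_{\{z\}\times Y}\|_{C^\alpha(Y)}$, and since $\psi$ depends on $z$ only through $\zeta$, smooth dependence on $z$ is automatic; in particular $\|\psi\|_{L^\infty(B_{3/2}\times Y)} \leq C\|\zeta\|_{L^\infty(B_2\times Y)}$ after absorbing the passage from $L^\infty$ to $C^\alpha$ via interior Schauder estimates for $\zeta$ itself (using that $\zeta = i\partial\bar\partial\phi_0$ for some $\phi_0$, so $\zeta$ is as regular as we like in the interior, though one must be slightly careful — see below).

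Second, set $\eta = \zeta - i\partial\bar\partial\psi$. By construction $\eta$ restricts to zero on every fiber $\{z\}\times Y$, so $\eta$ has no $dy^a \wedge d\bar{y}^b$ components; writing everything in terms of the product structure, $\eta$ is a $(1,1)$-form whose only surviving components are the $dz^i\wedge d\bar{z}^j$ part and the mixed $dz^i \wedge d\bar{y}^b$, $dy^a\wedge d\bar{z}^j$ parts. Since $\eta = i\partial\bar\partial(\phi_0 - \psi)$ is still $i\partial\bar\partial$-exact and closed, the vanishing of its fiber components forces strong constraints: integrating the mixed components over $Y$ against harmonic representatives and using $c_1(Y)=0$ (so $H^{1,0}(Y)$ and $H^{0,1}(Y)$ are spanned by parallel forms), one shows $\eta = i\partial\bar\partial_{\C^m}\chi$ for a function $\chi(z)$ that is essentially $z$-pluriharmonic-corrected — more precisely, $\eta$ descends to an $i\partial\bar\partial$-exact form on $B_2$ pulled back from the base, plus a term handled by the parallel $(1,0)$-forms on $Y$ which is exactly the mechanism behind the automorphisms $T_\ell$. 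On the base $B_2 \subset \C^m$ one then solves $i\partial\bar\partial\chi = $ (base part of $\eta$) by the standard interior estimate for the complex Monge-Ampère/Laplace operator on a ball (e.g.\ solve $\Delta\chi = \tr{\omega_{\C^m}}{\eta}$ by Newtonian potential on $B_{3/2}$, then correct by a pluriharmonic function), with $\|\chi\|_{L^\infty(B_1)} \leq C\|\eta\|_{L^\infty(B_{3/2})}$. Then $\phi = \psi + \chi$ (plus the contribution absorbed into $T_\ell$, which by our standing assumption from Step 2.1 is trivial) does the job.

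The main obstacle I anticipate is the regularity bookkeeping in the first stage: I want an $L^\infty$-to-$L^\infty$ bound, but the fiberwise inversion of $i\partial\bar\partial_Y$ naturally lands an $L^\infty$ bound on $\zeta$ only into a weak norm on $\psi$ (roughly $W^{2,p}$ or $C^{1,\alpha}$ via Calderón-Zygmund, losing derivatives the other way is not the issue — the issue is that $i\partial\bar\partial_Y\psi = \zeta|_{\{z\}\times Y}$ is a second-order equation, so $L^\infty$ control of $\zeta$ gives $\psi \in C^{1,\alpha}$ but the $z$-derivatives needed to estimate $i\partial\bar\partial\psi$ as a form on $B_2\times Y$ cost more). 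The clean way around this is: do not try to bound $\psi$ in $C^2$ directly from $\|\zeta\|_{L^\infty}$; instead keep $\zeta$ itself, observe that we only ever apply the proposition to $\zeta = \zeta_t$ which is smooth, run the construction to get \emph{some} $\phi$ with $i\partial\bar\partial\phi = \zeta$, and then prove the estimate $\|\phi\|_{L^\infty(B_1\times Y)}\le C\|\zeta\|_{L^\infty(B_2\times Y)}$ \emph{a posteriori} by a standard contradiction/compactness argument: if it failed there would be $\zeta_k$ with $\|\zeta_k\|_{L^\infty(B_2\times Y)}\to 0$ but $\|\phi_k\|_{L^\infty(B_1\times Y)}=1$, and after normalizing and extracting a limit one gets a nonzero $\phi_\infty$ on $B_1\times Y$ with $i\partial\bar\partial\phi_\infty = 0$, i.e.\ $\phi_\infty$ pluriharmonic and bounded — but one can arrange the normalization (subtracting fiber averages and base values at $0$) so that $\phi_\infty$ must vanish, a contradiction. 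This compactness route sidesteps all the sharp elliptic estimates; the only genuine input is local solvability of $i\partial\bar\partial\phi=\zeta$ on $B_2\times Y$, which follows from the fiberwise-plus-base construction sketched above together with the Künneth/Leray description of $H^{1,1}$ already used in Proposition \ref{p:translate}.
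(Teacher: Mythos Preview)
Your two-stage decomposition (fiber part with fiber mean zero, plus base part) is exactly the paper's approach: your $\psi$ is the paper's $\psi-\underline\psi$, and your $\eta$ is the paper's $\underline\zeta = i\partial\bar\partial\underline\psi$. But the execution has two concrete gaps.

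First, the discussion of mixed components and $T_\ell$ is a red herring. Once $\psi$ has fiber mean zero, any potential $\sigma$ for $\eta = \zeta - i\partial\bar\partial\psi$ satisfies $i\partial\bar\partial_Y\sigma = 0$ fiberwise, hence is constant along fibers, so $\eta$ is automatically pulled back from the base---no mixed terms, no $T_\ell$. More importantly, $\eta$ is then just the fiber average $\underline\zeta$ of $\zeta$, and $\|\underline\zeta\|_{L^\infty}\le C\|\zeta\|_{L^\infty}$ follows immediately by integrating over $Y$ (this is the paper's citation of \cite[(4.9)]{T}). This kills your worry about $z$-derivatives of $\psi$: you never need them.

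Second, your proposed solution of the base equation does not work. Solving $\Delta\chi = \tr{\omega_{\C^m}}{\eta}$ by the Newtonian potential and then ``correcting by a pluriharmonic function'' cannot yield $i\partial\bar\partial\chi = \eta$: the difference $\eta - i\partial\bar\partial\chi$ is trace-free, and adding a pluriharmonic function changes $i\partial\bar\partial\chi$ by zero, not by a trace-free form. The compactness fallback does not rescue this, because the kernel of $i\partial\bar\partial$ on $B_2\times Y$ consists of all pluriharmonic functions of $z$, an infinite-dimensional space that cannot be normalized away by subtracting fiber averages and a single point value; without a canonical bounded representative you have no compactness to extract $\phi_\infty$. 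The paper instead solves $i\partial\bar\partial\underline\psi'' = \underline\zeta$ on $B_2$ with genuine estimates: Poincar\'e lemma gives a bounded primitive $\underline\xi$ with $d\underline\xi = \underline\zeta$, $\bar\partial$-Neumann theory on the ball solves $\bar\partial\underline\psi' = \underline\xi^{0,1}$ with an $L^2$ bound, and then $\underline\psi'' = 2\,{\rm Im}\,\underline\psi'$ satisfies $i\partial\bar\partial\underline\psi'' = \underline\zeta$, with the $L^\infty(B_1)$ bound recovered from $\Delta\underline\psi'' = {\rm tr}(\underline\zeta)$ via interior elliptic estimates. This is the step you are missing.
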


\begin{proof}
%A subscript $L^p$ indicates the $L^p$ norm over $B_2 \times Y$ or $B_2$.

By assumption there is a potential $\psi$ on $B_2 \times Y$ such that $\zeta=i\partial\bar\partial\psi$. Let $\underline{\psi}$ be the fiberwise average of $\psi$. This is a function on $B_2$. Writing $\underline\zeta = i\partial\bar\partial\underline\psi$,  \cite[(4.9)]{T} shows that  $\|\underline\zeta\|_{L^\infty} \leq C\|\zeta\|_{L^\infty}$. The usual proof of the Poincar\'e lemma yields a $1$-form $\underline\xi$ on $B_2$ with $d\underline\xi = \underline\zeta$ and $\|\underline\xi\|_{L^\infty} \leq C\|\underline\zeta\|_{L^\infty}$. Then $\bar\partial\underline\xi^{0,1} = 0$, so $\bar\partial$-Neumann theory \cite[Cor 8.10]{Dem} produces a weak solution $\underline\psi'$ to $\bar\partial\underline\psi' = \underline\xi^{0,1}$ with $\|\underline\psi'\|_{L^2} \leq C\|\underline\xi^{0,1}\|_{L^2}$. Defining $\underline\psi'' = 2 {\rm Im}\,\underline\psi'$, it follows that $i\partial\bar\partial\underline\psi'' = \underline\zeta$ as currents on $B_2$, hence in particular $\Delta\underline\psi'' = {\rm tr}(\underline\zeta)$ as distributions, so $\underline\psi''$ is smooth with $\|\underline\psi''\|_{L^\infty(B_1)} \leq C(\|\underline\psi''\|_{L^2} + \|\underline\zeta\|_{L^\infty})$. Thus, $\zeta = i\partial\bar\partial(\psi-\underline\psi + \underline\psi'')$ with $\|\underline\psi''\|_{L^\infty(B_1)} \leq C\|\underline\zeta\|_{L^\infty}$. Then it remains to note that $\psi-\underline\psi$ has mean value zero on each fiber and $\Delta_Y(\psi-\underline\psi) = {\rm tr}(\zeta|_Y)$, so that $\|\psi-\underline\psi\|_{L^\infty} \leq C\|\zeta\|_{L^\infty}$ by fiberwise Moser iteration. Thus, a potential $\phi$ with the desired properties is given by $\phi = \psi-\underline\psi + \underline\psi''$.
\end{proof}

\subsection{Weak convergence of the functions $\sum_{j = 1}^m |dz^j|_{\omega_{t_i}}^2$} Proposition \ref{weaktrace} carries out Step \ref{4}. This is the first key difficulty of this paper. Let us first see why this step is nontrivial.

\begin{ex}\label{bad_example}
Define a flat K\"ahler metric $\omega_t$ on $\C^2$ with coordinates $(w^1,w^2) = (z,y)$ by
\begin{align}\label{e:bad_example}
\omega_t = \frac{i}{2}g_{j\bar{k}}dw^j\wedge d\bar{w}^k, \;\,(g_{j\bar{k}}) = \begin{pmatrix} 1 & c e^{-t/2}\\  c e^{-t/2}& (c^2+1)e^{-t} \end{pmatrix},
\end{align}
for a fixed $c\in\mathbb{R}$. This induces a flat K\"ahler metric on $\C \times Y$, where $Y = \C/(\Z + \Z i)$. Notice that the metric $\omega_Y$ in the sense of Theorem \ref{liouville}(1) is given by $\omega_Y$ $=$ $\frac{i}{2}(c^2+1)dy \wedge d\bar{y}$. Then $\omega_t$ is uniformly bounded above and below by the product metric $\omega_{\C} + e^{-t}\omega_Y$ (where $\omega_\C = \frac{i}{2}dz \wedge d\bar{z}$), has the same determinant as this product metric up to a fixed constant factor, and converges to $\omega_\C$ with respect to any fixed norm as $t \to \infty$. But ${\rm tr}_{\omega_t}(\omega_\C) = \frac{1}{2}|dz|_{\omega_t}^2 =c^2+1 > m = 1$ unless $c = 0$.
\end{ex}

It is then clear that in order to prove that the limit of ${\rm tr}_{\omega_{t_i}}(\omega_{\C^m})$ is $m$ in our situation, we need to find a way of proving that the ``off-diagonal terms'' as in \eqref{e:bad_example} are negligible. The next proposition achieves this in a weak sense, using the fact that the difference of $\omega_t$ and $\omega_{\C^m} + e^{-t}\omega_Y$ is $i\partial\bar\partial$-exact (after Step \ref{2}.1). This fails to be true in Example \ref{bad_example} for $c \neq 0$. More precisely:

\begin{ex}\label{bad_example_contd}
In Example \ref{bad_example}, the off-diagonal terms of $\omega_t - (\omega_\C + e^{-t}\omega_Y)$ are given by $ce^{-t/2}\zeta$, where $\zeta = \frac{i}{2}(dz \wedge d\bar{y} + dy \wedge d\bar{z})$. On the universal cover, $\zeta = d\xi$ with $\xi = {\rm Im}(\bar{z}dy)$, and $\xi$ is invariant under the deck group. Also on the universal cover, $\zeta = i\partial\bar\partial \phi$ with $\phi = {\rm Re}(\bar{z}{y})$, but there exists no pluriharmonic function $\pi$ on $\C^2$ such that $\phi +\pi$ is invariant under the deck group; indeed, if $\pi$ did exist, then $\phi + \pi$ would depend only on $z$ by Liouville's theorem applied fiberwise, contradicting the fact that $i\partial\bar\partial(\phi+\pi) = \zeta$, which contains $dy$ and $d\bar{y}$ components.

It is instructive to compare this with the formalism from \cite{GW} used in the proof of Proposition \ref{p:translate}.
With $\xi$ as above, we have that $\xi^{0,1}|_{\{z\} \times Y} = \frac{i}{2}z d\bar{y}$, which is parallel on $\{z\}\times Y$, so the holomorphic function $f = \Phi[\xi^{0,1}]$ is given by $f(z) = \frac{i}{2}z d\bar{y}$. Moreover, if $T_t(z,y) = (z,y-Az)$ with $A = \frac{c}{c^2+1}e^{t/2}$, then $\tilde\omega_t = T_t^*\omega_t$ has fundamental matrix ${\rm diag}(\frac{1}{c^2+1}, (c^2+1)e^{-t})$ and ${\rm tr}_{\tilde{\omega}_t}(\tilde{\omega}_\infty) = 1$ as desired.
\end{ex}

\begin{proposition}\label{weaktrace}
For every test function $\eta \in C^\infty_0(\C^m)$ on the base,
\begin{align}\label{e:weaktrace}
\lim_{i\to\infty} \int_{\C^m \times Y} \eta({\rm tr}_{\omega_{t_i}}(\omega_{\C^m}) - m)(\omega_{\C^m}^m \wedge \omega_Y^n) = 0.
\end{align}
\end{proposition}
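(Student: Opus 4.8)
The plan is to pass from a pointwise identity for $\mathrm{tr}_{\omega_t}(\omega_{\mathbb{C}^m})$ to an integral identity, integrate by parts in the fiber direction to kill the genuinely fiberwise pieces, and then use the $C^{1,\alpha}_{loc}$ convergence $\phi_{t_i}\to 0$ to send the remaining boundary-type terms to zero. Concretely, write $\omega_t = (\omega_{\mathbb{C}^m}+e^{-t}\omega_Y)+i\partial\bar\partial\phi_t$ (valid after Steps \ref{2}.1 and \ref{3}). Since $\mathrm{tr}_{\omega_t}(\omega_{\mathbb{C}^m})$ is $g_t$-dual to $|\partial_z|^2_{\omega_t}$, and the latter has a clean distributional limit, the natural move is \emph{not} to work with $\mathrm{tr}_{\omega_t}(\omega_{\mathbb{C}^m})$ directly but to exploit the Monge-Amp\`ere constraint $\omega_t^{m+n}=c\,e^{-nt}\omega_{\mathbb{C}^m}^m\wedge\omega_Y^n$. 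Expanding $\omega_t^{m+n}$ and pairing with $\omega_{\mathbb{C}^m}^{m-1}\wedge\omega_Y^n$ (or differentiating the determinant identity) expresses $\mathrm{tr}_{\omega_t}(\omega_{\mathbb{C}^m})-m$, up to the fixed volume normalization, as a combination of mixed terms that each contain at least one factor of $i\partial\bar\partial\phi_t$ or of $e^{-t}\omega_Y$; the point is that after multiplying by $\eta(z)\,\omega_{\mathbb{C}^m}^m\wedge\omega_Y^n$ and integrating, one can move $i\partial\bar\partial$ off $\phi_t$ onto the smooth compactly supported data $\eta$ (in $z$) and $\omega_Y$ (in $y$).

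The key technical step is the integration by parts. I would write $\int \eta(\mathrm{tr}_{\omega_{t_i}}(\omega_{\mathbb{C}^m})-m)\,\omega_{\mathbb{C}^m}^m\wedge\omega_Y^n$ as a sum of terms of the shape $\int i\partial\bar\partial\phi_{t_i}\wedge(\text{smooth forms built from }\eta, \omega_{\mathbb{C}^m}, \omega_Y, \text{ and }e^{-t_i}\omega_Y)$, bearing in mind that the total degree must be $m+n$ in $(m+n)$ complex dimensions. Because $\eta$ has compact support in $\mathbb{C}^m$ and $Y$ is closed, there are no boundary terms, so these integrals equal $\int \phi_{t_i}\wedge i\partial\bar\partial(\text{smooth forms})$. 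The terms that are purely fiberwise differentiations of $\phi_{t_i}$ (the analogue of the $dy\wedge d\bar y$ components in Example \ref{bad_example_contd}) integrate to zero after the by-parts because $i\partial_Y\bar\partial_Y$ of the fixed forms $\omega_Y^n$, $\omega_{\mathbb{C}^m}^m$ vanishes in the relevant bidegree, or more precisely because $\int_Y i\partial\bar\partial(\cdot)\wedge\omega_Y^{n-1}=0$ by Stokes on the closed fiber; this is exactly the structural fact that fails for the non-exact off-diagonal perturbation in Example \ref{bad_example}. The surviving terms carry an honest factor of $\phi_{t_i}$ (not just its derivatives) paired against a fixed smooth compactly supported form, plus possibly an explicit $e^{-t_i}$ decay factor, so they tend to $0$ using only $\phi_{t_i}\to 0$ in $C^0_{loc}$ (indeed in $L^1_{loc}$ would suffice), together with the uniform bound $C^{-1}(\omega_{\mathbb{C}^m}+e^{-t}\omega_Y)\le\omega_t\le C(\omega_{\mathbb{C}^m}+e^{-t}\omega_Y)$ to control any leftover factors of $\mathrm{tr}_{\omega_{t_i}}(\omega_{\mathbb{C}^m})$ or $e^{-t_i}\mathrm{tr}_{\omega_{t_i}}(\omega_Y)$ that appear with a small coefficient.

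The main obstacle I anticipate is bookkeeping the expansion of the Monge-Amp\`ere identity into a manageable list of terms and verifying that \emph{every} term either (a) is fiberwise-exact and hence killed by Stokes in $Y$, or (b) retains a full factor of $\phi_{t_i}$ after integration by parts, or (c) comes with an explicit power of $e^{-t_i}$. A clean way to organize this is to differentiate the identity $\det(g_t) = c\,e^{-nt}\det(g_{\mathbb{C}^m}+e^{-t}g_Y)$ with respect to the $\phi_t$-perturbation, or equivalently to use the elementary algebraic fact that for a block-structured Hermitian matrix $\begin{pmatrix} A & B \\ B^* & D\end{pmatrix}$ of fixed determinant, the base-trace $\mathrm{tr}(A^{-1}_{\text{Schur}})$ deviates from $m$ by the Schur-complement correction $\mathrm{tr}(A^{-1}BD^{-1}B^*\cdots)$, which is manifestly built from the off-diagonal block $B$ — and $B$ is precisely the mixed part of $i\partial\bar\partial\phi_t$, i.e. $\partial_z\bar\partial_y\phi_t$. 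Then the weak convergence statement reduces to showing $\int \eta\, (\text{quadratic expression in }\partial_z\bar\partial_y\phi_{t_i})\to 0$, which follows by one integration by parts in $y$ moving a $\bar\partial_y$ onto a factor carrying $\phi_{t_i}$ and then invoking $C^{1,\alpha}_{loc}$ convergence; I would be careful that the $C^{1,\alpha}$ (not $C^2$) control of $\phi_{t_i}$ is enough precisely because the by-parts leaves at most first derivatives of $\phi_{t_i}$ paired with derivatives of the smooth data.
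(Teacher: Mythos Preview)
Your overall strategy---use the Monge-Amp\`ere identity to rewrite $({\rm tr}_{\omega_t}(\omega_{\C^m})-m)\omega_t^{m+n}$ as a polynomial in $i\partial\bar\partial\phi_t$ and then integrate by parts onto $\eta$---is the same as the paper's. But there is a genuine gap in your bookkeeping. When you expand, the terms take the form $e^{(\ell-m+k)t}\,\omega_{\C^m}^\ell\wedge\omega_Y^{m+n-k-\ell}\wedge(i\partial\bar\partial\phi_t)^k$, and the exponent $\ell-m+k$ ranges up to $n$; these are \emph{growing} factors, not ``an explicit $e^{-t_i}$ decay factor.'' The paper handles this by splitting $i\partial\bar\partial\phi_t$ into its base-base, base-fiber, and fiber-fiber pieces $\alpha_t,\beta_t,\gamma_t$ and observing (from the uniform equivalence $\omega_t\sim\omega_{\C^m}+e^{-t}\omega_Y$) that $|\beta_t|\le Ce^{-t/2}$ and $|\gamma_t|\le Ce^{-t}$ with respect to a fixed metric. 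A dimension count then shows that in every nonzero term the combined decay $e^{-(r/2+s)t}$ from $\beta_t^r\gamma_t^s$ exactly cancels $e^{(\ell-m+k)t}$, yielding an $O(1)$ bound. Only \emph{after} this cancellation does the extra factor of $\phi_t$ (or, in the paper, the sharper $\phi_t-\underline{\phi_t}=O(e^{-t})$) produced by one integration by parts onto $\eta$ push the term to $o(1)$. Your account never isolates these growing exponentials or the compensating decay, and your claim that the post-IBP form is ``fixed smooth compactly supported'' is false: it still contains $(i\partial\bar\partial\phi_t)^{k-1}$, whose size must be tracked through this same mechanism.

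A second, related error: you write that ``the by-parts leaves at most first derivatives of $\phi_{t_i}$,'' which is only true when $k=1$. For $k\ge 2$ the surviving $(i\partial\bar\partial\phi_t)^{k-1}$ involves second derivatives; these are not controlled by the $C^{1,\alpha}$ convergence but by the uniform metric equivalence (i.e.\ $|i\partial\bar\partial\phi_t|_{\omega_{P,t}}\le C$). Your Schur-complement alternative runs into the same issue in disguise: with fiber block $D=O(e^{-t})$ one has $D^{-1}=O(e^t)$, so $BD^{-1}B^*=O(1)$ rather than small, and you again need the careful counting above. Finally, your ``Stokes on the closed fiber'' mechanism kills only the purely fiberwise terms ($\ell=m$, all $\gamma_t$); the genuinely dangerous mixed terms involving $\beta_t$ are not dispatched this way and are precisely the ones Example \ref{bad_example} warns about.
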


\begin{proof}
We abbreviate $\omega_{\C^m} + e^{-t}\omega_Y =  \omega_{P,t}$, so that $\omega_t = \omega_{P,t} + i\partial\bar\partial\phi_t$ by Steps \ref{2}--\ref{3} with $\phi_{t_i} \to 0$ in $C^{1,\alpha}_{loc}(\C^m \times Y)$ for a certain sequence $t_i \to \infty$, which is the sequence featuring in \eqref{e:weaktrace}. For simplicity we will pretend that $\phi_t \to 0$ as $t\to\infty$. The following arguments are inspired by the method of proof of \cite[Thm 4.1]{T} (recall that we already used a variant of the latter in Step \ref{2}.3).

Thanks to the fact that $e^{nt}\omega_t^{m+n} =  c(\omega_{\C^m}^m \wedge \omega_Y^n)$, it suffices to consider the expression
\begin{align}
&[{\rm tr}_{\omega_t}(\omega_{\C^m}) - m] (e^{nt}\omega_{t}^{m+n})\notag\\
&=e^{nt}[(m+n)\omega_{\C^m} \wedge (\omega_{P,t} + i\partial\bar\partial\phi_t)^{m+n-1} - m(\omega_{P,t} + i\partial\bar\partial\phi_t)^{m+n}]\notag \\
&=\sum_{k=0}^{m+n} e^{nt}\left[(m+n) {m+n-1\choose k}\omega_{\C^m} \wedge \omega_{P,t}^{m+n-1-k} - m{m+n\choose k}\omega_{P,t}^{m+n-k}\right]\wedge (i\partial\bar\partial\phi_t)^k\notag\\
&=\sum_{k=0}^{m+n}\left[\sum_{\ell = \max\{0,m-k\}}^{\min\{m,m+n-k\}} e^{(\ell-m+k)t} \frac{(m+n)!(\ell-m)}{k!\ell! (m+n-k-\ell)!}\omega_{\C^m}^\ell \wedge \omega_Y^{m+n-k-\ell}\right]\wedge (i\partial\bar\partial\phi_t)^k.\label{e:expand}
\end{align}
We now decompose $\phi_t = \underline{\phi_t} + (\phi_t - \underline{\phi_t})$, where the underline denotes fiberwise averages, and further $i\partial\bar\partial(\phi_t-\underline{\phi_t}) = \alpha_t+\beta_t+\gamma_t$ according to $\Lambda^2(\C^m\times Y) = \Lambda^2\C^m \oplus (\Lambda^1\C^m \otimes \Lambda^1Y) \oplus \Lambda^2Y$. Then
\begin{align}
\label{e:expand2}
(i\partial\bar\partial \phi_t)^k &= \sum_{p=0}^k {k\choose p}(i\partial\bar\partial\underline{\phi_t})^p \wedge (i\partial\bar\partial(\phi_t-\underline{\phi_t}))^{k-p}\\
&= \sum_{p+q+r+s=k} \frac{k!}{p!q!r!s!}(i\partial\bar\partial\underline{\phi_t})^p \wedge \alpha_t^q \wedge \beta_t^r \wedge \gamma_t^s.\label{e:expand3}
\end{align}
Since $|i\partial\bar\partial\phi_t| \leq C$, the same holds for $i\partial\bar\partial\underline{\phi_t}$ by \cite[(4.9)]{T}, and hence for $\alpha_t,\beta_t,\gamma_t$. However, because $\omega_t|_Y = e^{-t}\omega_Y + \gamma_t$, we actually know that $|\gamma_t| \leq Ce^{-t}$ with respect to any fixed background metric. Similarly, $|\beta_t| \leq Ce^{-t/2}$ by a Cauchy-Schwarz estimate with respect to $\omega_t$. As $2(\ell+p+q)+r = 2m$ for all nonzero terms in the big sum over $k,\ell,p,q,r,s$, we get $s + \frac{r}{2} =\ell-m+k$, so these bounds on $|\gamma_t|$ and $|\beta_t|$ compensate the $e^{(\ell-m+k)t}$ factors in \eqref{e:expand}, leading to an $O(1)$ overall bound.

Now even if $i\partial\bar\partial\phi_t \to 0$ in $L^{\mathbf p}_{loc}$ for all $\mathbf p$ (which is more than we know from  Step \ref{3}), we would not be able to prove that the terms with $p = q = 0$ and $r > 0$ are $o(1)$ rather than $O(1)$. Indeed, so far we have not made any real use of the fact that $\omega_t - \omega_{P,t}$ is $i\partial\bar\partial$-exact, and the corresponding terms in Example \ref{bad_example} are precisely the ones that obstruct the desired convergence. To fix this problem, we now multiply the whole expression by $\eta$ and integrate by parts, as follows. 

$\bullet$ If $p < k$ in \eqref{e:expand2}, throw one $i\partial\bar\partial$ from $(i\partial\bar\partial(\phi_t-\underline{\phi_t}))^{k-p}$ onto $\eta$, then proceed as in \eqref{e:expand3} with $k$ replaced by $k-1$. In the resulting sum, $2(\ell + p+q+1) + r = 2m$ for all nonzero terms because of the additional factor of $i\partial\bar\partial\eta$, so again $s + \frac{r}{2} = \ell-m+k$ as before. Then it suffices to observe that $|\phi_t - \underline{\phi_t}| = O(e^{-t})$ pointwise because $0 \leq ne^{-t} + \Delta_Y(\phi_t - \underline{\phi_t}) = {\rm tr}(\omega_t|_Y) \leq Ce^{-t}$.

$\bullet$ If $0 < p = k$ in \eqref{e:expand2}, then all nonzero terms in the remaining sum over $k,\ell$ satisfy $k+\ell = m$, so the $e^{(\ell-m+k)t}$ factor in \eqref{e:expand} is bounded. Throw one $i\partial\bar\partial$ from $(i\partial\bar\partial\underline{\phi_t})^k$ onto $\eta$ and observe that $\underline{\phi_t} \to 0$ locally uniformly on $\C^m \times Y$ because the same is true for $\phi_t$.

$\bullet$ If $0 = p = k$ in \eqref{e:expand2}, then we simply note that the $k = 0$ term in \eqref{e:expand} is zero anyway.
\end{proof}

\subsection{From weak to $L^1_{loc}$ and pointwise} Proposition \ref{l1loc} below deals with Step \ref{5}.1, proving that the weak convergence of Step \ref{4} can be promoted to $L^1_{loc}$ convergence. This is the second technical difficulty of this paper after Step \ref{4}. Proposition \ref{p:upper_bounds} then establishes Step \ref{5}.2, which is easier.

\begin{proposition}\label{l1loc}
Let $u_i$ be a sequence of smooth $\omega_{t_i}$-subharmonic functions on $\C^m \times Y$ such that $|u_i| \leq C$. Assume that there exists a constant function $u$ such that for all $\eta \in C^\infty_0(\C^m)$, 
\begin{align}\label{hyp:weak}
\lim_{i\to\infty} \int_{\C^m \times Y} \eta (u_i-u)(\omega_{\C^m}^m \wedge \omega_Y^n) = 0.
\end{align} 
Then $u_i$ converges to $u$ in $L^1_{loc}$ with respect to the fixed volume form $\omega_{\C^m}^m \wedge \omega_Y^n$ on $\C^m \times Y$. 
\end{proposition}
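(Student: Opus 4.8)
The plan is to reduce the statement to a pointwise estimate and then prove that estimate by a heat–kernel argument adapted from the proof of \cite[Thm 3.2.12]{H}. First I would subtract the constant $u$, so that $u=0$ and we must show $\int_K|u_i|\,(\omega_{\C^m}^m\wedge\omega_Y^n)\to0$ for every compact $K\subseteq\C^m\times Y$; since any such $K$ lies in some $K_0\times Y$ with $K_0\subseteq\C^m$ compact, it suffices to treat $K=K_0\times Y$. There I write $\int_K|u_i|=2\int_K u_i^+-\int_K u_i$. The term $\int_K u_i$ is handled without using subharmonicity: fiberwise integration turns it into a fixed multiple of $\int_{K_0}\bar u_i\,\omega_{\C^m}^m$, where $\bar u_i$ is the fiberwise mean, and \eqref{hyp:weak} (with $u=0$) says exactly that $\int_{\C^m}\eta\,\bar u_i\,\omega_{\C^m}^m\to0$ for all $\eta\in C^\infty_0(\C^m)$; together with $|\bar u_i|\le C$ and $L^1$-approximation of $\mathbf 1_{K_0}$ by test functions, this gives $\int_{K_0}\bar u_i\,\omega_{\C^m}^m\to0$. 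So everything comes down to $\int_K u_i^+\to0$, and by the uniform bound and dominated convergence this follows once we know $\limsup_{i\to\infty}u_i(x_0)\le0$ for every $x_0$. This last inequality is our substitute for the step in \cite[Thm 3.2.12]{H} where $u_j\le u_j*\phi_\varepsilon$ forces $\limsup u_j\le u$; because \eqref{hyp:weak} only controls integrals against functions pulled back from the base, the Euclidean mollifier $\phi_\varepsilon$ must be replaced by the intrinsic heat semigroup of $\omega_{t_i}$.

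To prove $\limsup_i u_i(x_0)\le0$ at a fixed $x_0=(z_0,y_0)$, fix $s>0$. Since $\omega_{t_i}$ is uniformly equivalent to the complete metric $\omega_{\C^m}+e^{-t_i}\omega_Y$, the manifold $(\C^m\times Y,\omega_{t_i})$ is complete and Ricci-flat, so $u_i\le e^{s\Delta_{\omega_{t_i}}}u_i$ and hence $u_i(x_0)\le\int_{\C^m\times Y}H^{t_i}_s(x_0,x')\,u_i(x')\,dV_{t_i}(x')$, where $H^{t_i}_s$ is the heat kernel. I would then use two inputs: (a) metric and volume comparisons --- the projection to $\C^m$ is uniformly Lipschitz, for $i$ large an $\omega_{t_i}$-ball of fixed radius $r$ is squeezed between products $B^{\C^m}_{c_1 r}(z_0)\times Y$ and $B^{\C^m}_{c_2 r}(z_0)\times Y$, so $\mathrm{Vol}_{t_i}(B^{t_i}_{\sqrt s}(x_0))$ is comparable to $e^{-nt_i}s^m$, and $dV_{t_i}$ is a fixed multiple of $e^{-nt_i}(\omega_{\C^m}^m\wedge\omega_Y^n)$ by Step \ref{1}; (b) the fact that $\mathrm{Ric}(\omega_{t_i})=0$, which by Li--Yau type estimates gives the Gaussian upper bound on $H^{t_i}_s$ and the Li--Yau gradient bound for $H^{t_i}_s$. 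Writing $u_i=\bar u_i+(u_i-\bar u_i)$: in $\int H^{t_i}_s(x_0,x')(u_i-\bar u_i)(x')\,dV_{t_i}$ I may replace $H^{t_i}_s(x_0,(z',\cdot))$ by any fiber-constant function; since an $\omega_Y$-unit vertical vector has $\omega_{t_i}$-length $O(e^{-t_i/2})$, the gradient bound makes the fiberwise oscillation of $H^{t_i}_s(x_0,(z',\cdot))$ at most $O(e^{-t_i/2})$ times its fiberwise maximum, and with the Gaussian bound, the volume comparison, and $|u_i-\bar u_i|\le2C$ this term is $O(e^{-t_i/2})\to0$ (the non-sharp constant in the Li--Yau upper bound does no harm, being beaten by $e^{-t_i/2}$). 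For $\int H^{t_i}_s(x_0,x')\,\bar u_i(z')\,dV_{t_i}=\int_{\C^m}\bar u_i\,d\nu_i$, with $\nu_i$ the pushforward of $H^{t_i}_s(x_0,\cdot)\,dV_{t_i}$ to $\C^m$: the $\nu_i$ are probability measures whose densities with respect to $\omega_{\C^m}^m$ are uniformly bounded with uniform Gaussian decay (from the Gaussian bound) and equicontinuous on compact sets (from the gradient bound), so they are precompact in $C^0_{loc}(\C^m)$; along a subsequence $\nu_i\to\nu$ in total variation, and then $\int\bar u_i\,d\nu_i\to0$ because the difference from $\int\bar u_i\,d\nu$ is $O(\|\nu_i-\nu\|_{\mathrm{TV}})$ and $\int\bar u_i\,d\nu\to0$ ($\nu$ having an $L^1$ density, $\bar u_i\to0$ weakly, $|\bar u_i|\le C$). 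As every subsequence produces the same limit, $(e^{s\Delta_{\omega_{t_i}}}u_i)(x_0)\to0$, so $\limsup_i u_i(x_0)\le0$, as needed.

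The main obstacle is exactly this pointwise estimate, and within it the passage from \eqref{hyp:weak} --- which only sees functions pulled back from $\C^m$ --- to control of $u_i$ along the collapsing fibers; this is the one point where the subharmonicity of $u_i$, together with the collapsing product structure of $\omega_{t_i}$, is indispensable. The essential mechanism is that $e^{s\Delta_{\omega_{t_i}}}u_i$ becomes asymptotically constant along the fibers of $\C^m\times Y\to\C^m$, so that testing $u_i$ against the intrinsic heat kernel amounts, in the limit, to testing against a function on the base, where \eqref{hyp:weak} applies. The comparison of $\omega_{t_i}$-balls with products $B^{\C^m}\times Y$, the resulting volume bounds, and the equicontinuity and compactness of the pushed-forward heat kernels are the technical ingredients absent from Hörmander's Euclidean argument, and all of them rely on $\mathrm{Ric}(\omega_{t_i})=0$ through the Li--Yau estimates.
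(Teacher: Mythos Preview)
Your argument is correct and follows essentially the same strategy as the paper: replace the Euclidean mollifier by the heat semigroup of $\omega_{t_i}$, use subharmonicity for $u_i\le e^{s\Delta_{\omega_{t_i}}}u_i$, and invoke Li--Yau Gaussian and gradient estimates (via $\mathrm{Ric}\ge 0$) to show that the heat-smoothed object becomes asymptotically constant on the collapsing fibers, so that the base-only hypothesis \eqref{hyp:weak} suffices. The paper's organization differs only slightly---it applies the heat operator to a test function $\chi$ via the duality $\int\chi(H_{i,\delta}\ast u_i)\,dV=\int(H_{i,\delta}\ast\chi)(u_i-u)\,dV$, proves locally \emph{uniform} convergence of $H_{i,\delta}\ast u_i$, and then finishes with H\"ormander's $\epsilon$-trick rather than your pointwise $\limsup$ bound plus dominated convergence on $u_i^{+}$---but the core mechanism is identical.
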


\begin{proof} For subharmonic functions on $\R^d$ this is a special case of \cite[Thm 3.2.12]{H}. We will now adapt the proof of \cite{H} to our setting. Because the $u_i$ are smooth and converge to a constant function $u$ as distributions, we can safely skip the part of the proof before (3.2.5) in \cite{H}.

The main point of the remainder of the proof in \cite{H} is that if $v_i$ are smooth, uniformly bounded, subharmonic functions on $\R^d$ converging weakly to a smooth subharmonic function $v$ (not required to be constant), and if  $\varphi_\delta$ is a standard family of radial mollifiers, then:

(1) $v_i \leq \varphi_\delta \ast v_i$ (this is the only point where the subharmonicity of $v_i$ is used),

(2) the sequence $\varphi_\delta \ast v_i$ is locally equibounded and equicontinuous for any fixed $\delta$,

(3) $\varphi_\delta \ast v_i \to \varphi_\delta \ast v$ in $L^\infty_{loc}$ as $i \to\infty$, and $\varphi_\delta \ast v \to v$ in $L^1_{loc}$ as $\delta \to 0$.

To recover these properties in our setting, we now replace H\"ormander's smoothing operator $\varphi_\delta \, \ast$ by the time-$\delta^2$ heat evolution operator $H_{i,\delta} \,\ast$ associated with $(\C^m \times Y, \omega_{t_i})$. Then:

($1'$) $u_i \leq H_{i,\delta} \ast u_i$,

($2'$) $|H_{i,\delta} \ast u_i| \leq C$ and
$\sup_{\C^m \times Y} |\nabla^{\omega_{t_i}}(H_{i,\delta} \ast u_i)|_{\omega_{t_i}}  \leq C\min\{\delta,1\}^{-1}$, and 

($3'$) for all $\delta,R > 0$ it holds that $\lim_{i\to\infty} \sup_{B_R\times Y} |(H_{i,\delta}\ast u_{i}) - u| = 0$.

Let us prove these properties before showing that they imply the desired $L^1_{loc}$ convergence. ($1'$) and ($2'$) immediately follow from our assumption that $|u_i| \leq C$, together with suitable versions of the maximum principle \cite[Thm 15.2]{li} and of the Cheng-Yau gradient estimate \cite[Thm 3]{Kot}. Here we  rely on the fact that ${\rm Ric}(\omega_{t_i}) \geq 0$ to ensure that the constant $C$ of ($2'$) is uniform. 

To prove ($3'$), fix $\delta,R > 0$. Let $w_i = (H_{i,\delta} \ast u_i) - u$.
By ($2'$) and Arzel\`a-Ascoli, some subsequence $w_{i_j}$ converges in $C^{0,\alpha}$ on $B_R \times Y$ to $w_\infty$, where $w_\infty$ is Lipschitz, and is constant on all fibers because $|w_i(z,y) - w_i(z,y')| \leq C_\delta e^{-t_i/2}{\rm dist}_{\omega_{Y}}(y,y')$ by ($2'$). Thus, to prove that $w_\infty = 0$, it suffices to prove that $\int \chi w_{i_j}(\omega_{\C^m}^m \wedge \omega_Y^n) \to 0$ for any fixed test function $\chi \in C^\infty_0(B_R)$ on the base, after passing to a further subsequence depending on $\chi$ if necessary.\footnote{As it turns out, we will not actually need to use the fact that $w_\infty$ and $\chi$ are pulled back from the base. However, the argument proving that $w_\infty$ is pulled back from the base also applies to $\eta_\infty$ below, and this will be crucial.} We will now prove this. 

The key step is to verify the identity 
\begin{align}\label{e:associate}
\int_{\C^m \times Y} \chi w_{i_j}(\omega_{\C^m}^m \wedge \omega_Y^n) = \int_{\C^m \times Y} \eta_{i_j}(u_{i_j}-u)(\omega_{\C^m}^m\wedge\omega_Y^n), \;\, \eta_i = H_{i,\delta} \ast \chi.
\end{align}
For this one uses the fact that $w_i$ $=$ $H_{i,\delta} \ast (u_i-u)$ because $u$ is constant, the Monge-Amp\`ere equation $\omega_{t_i}^{m+n} = c e^{-nt_i}(\omega_{\C^m}^m \wedge \omega_Y^n)$ (here $c$ is some fixed normalizing factor), and standard Gaussian upper bounds for the heat kernel on a \emph{fixed} manifold in order to justify applying Fubini's theorem.

The estimates ($2'$) hold verbatim for $\eta_i$ (with constants depending on $\chi$), so applying Arzel\`a-Ascoli once again, we learn that some subsequence $\eta_{i_{j_k}}$ converges in $C^{0,\alpha}_{loc}$ on $\C^m \times Y$ to $\eta_\infty$, where $\eta_\infty$ is globally uniformly Lipschitz, and is constant on all fibers. In addition, by \cite[Thm 13.4]{li},
\begin{align}
|\eta_i(x)| & \leq C \int_{\C^m\times Y} |B_{\omega_{t_{i}}}(x,\delta)|_{\omega_{t_i}}^{-1/2}|B_{\omega_{t_{i}}}(x',\delta)|_{\omega_{t_i}}^{-1/2}\exp(-{\rm dist}_{\omega_{t_{i}}}(x,x')^2/5\delta^2)|\chi(x')| \omega_{t_{i}}(x')^{m+n}\notag\\
&\leq C_{\delta,R,\chi}\exp(-{\rm dist}_{\omega_{\C^m}}(z,B_R)^2/C\delta^2), \;\,x = (z,y) \in \C^m\times Y.\label{e:gaussian}
\end{align}

Now fix any $\epsilon>0$, aiming to prove that for all $k \geq k_{\delta,R,\chi,\epsilon}$, the absolute value of the left-hand side of \eqref{e:associate} for $j = j_k$ is bounded by some constant $C_{\delta,R,\chi}$ times $\epsilon$. To this end, decompose
\begin{align}\label{e:split}
\eta_i = \chi_{S} \eta_{\infty,\beta} + \chi_{S} (\eta_\infty - \eta_{\infty,\beta})+ \chi_{S}(\eta_i - \eta_\infty) + (1-\chi_{S})\eta_i, \end{align}
where $\eta_{\infty,\beta} = \varphi_{\beta} \ast \eta_\infty$ for some standard radial mollifier $\varphi_\beta$ of radius $\beta$ on $\C^{m}$ (with $\beta = \beta_{S,\epsilon}$ to be determined) and where $\chi_S$ is some standard smoothing of the characteristic function of $B_{S}$ in $\C^m$ (with $S = S_{\delta,R,\epsilon}$ to be determined). Notice that $|\eta_\infty - \eta_{\infty,\beta}| \leq C_{\delta,\chi}\beta$ globally on $\C^m$ because $\eta_\infty$ is uniformly Lipschitz. Also, if $k \geq k_{\delta,\chi,S,\gamma}$, then $|\eta_{i_{j_k}} - \eta_\infty| \leq \gamma$ on $B_{2S} \times Y$. Bringing these bounds and \eqref{e:gaussian} into \eqref{e:split} for $i = i_{j_k}$, and using \eqref{e:associate} for $j = j_k$, we learn that for all $k\geq k_{\delta,\chi,S,\gamma}$, 
\begin{align}
\left|\int_{B_R \times Y} \chi w_{i_{j_k}}(\omega_{\C^m}^m \wedge \omega_Y^n)\right| &\leq \left|\int_{\C^m\times Y} \chi_S\eta_{\infty,\beta}(u_{i_{j_k}}-u)(\omega_{\C^m}^m \wedge \omega_Y^n)\right|\label{final1}\\
&\hskip4.7mm + C_{\delta,\chi} S^{2m}\beta + C S^{2m}\gamma + C_{\delta,R,\chi} \int_{S/2}^\infty \exp(-\rho^2/C\delta^2)\rho^{2m-1}d\rho,\notag
\end{align}
provided that $S \geq 4R$. Since $\chi_S\eta_{\infty,\beta} \in C^\infty_0(\C^m)$, our hypothesis \eqref{hyp:weak} implies that the right-hand side of \eqref{final1} is at most $\epsilon$ once $k \geq k_{\delta,R,\chi,S,\beta,\epsilon}$. Thus, it remains to choose $S = S_{\delta,R,\epsilon}$ large enough (roughly on the order of $|{\log \epsilon}|^{1/2}$ if $\delta$ and $R$ are given) and $\beta = \beta_{S,\epsilon}$ and $\gamma = \gamma_{S,\epsilon}$ small enough.

The upshot  is that $\lim_{j\to\infty} \sup_{B_R \times Y} |(H_{i_j,\delta} \ast u_{i_j}) - u| = 0$ for some sequence $i_j$ depending on $\delta,R$. Finally, we note that passing to such a subsequence is actually unnecessary because what we have really proved here is that given $\delta$ and $R$, every \emph{subsequence} of $u_{i}$ has a further subsequence so that ($3'$) holds along this sub-subsequence; and this obviously implies ($3'$) as stated.

We are now in position to adapt the end of the proof of \cite[Thm 3.2.12]{H} to our setting. In fact, we can follow \cite{H} almost word by word. Choose $\eta \geq 0$ in $C^\infty_0(\C^m)$ and $\epsilon > 0$. By \eqref{hyp:weak},
$$\lim_{i\to\infty}\int_{\C^m\times Y}\eta(u - u_i + \epsilon)(\omega_{\C^m}^m \wedge \omega_Y^n) =  \int_{\C^m\times Y} \epsilon\eta(\omega_{\C^m}^m \wedge \omega_Y^n).$$
Choose $\delta > 0$, and choose $R > 0$ with ${\rm supp}(\eta) \subset B_R$. Then $u_i \leq H_{i,\delta} \ast u_i$ by ($1'$), and $H_{i,\delta}\ast u_i \to u$ uniformly on ${\rm supp}(\eta) \times Y$ by ($3'$). Thus, $\eta(u - u_i + \epsilon) \geq 0$ for $i \geq i_{\eta,\epsilon}$. This implies that
$$\limsup_{i\to\infty} \int_{\C^m\times Y} \eta|u_i-u|(\omega_{\C^m}^m \wedge \omega_Y^n) \leq 2\int_{\C^m\times Y} \epsilon\eta(\omega_{\C^m}^m \wedge \omega_Y^n)$$
by subtracting and adding $\epsilon$ on the left-hand side. The claim follows by letting $\epsilon\to 0$.
\end{proof}

\begin{rk}\label{rk:grig}
It seems likely that thanks to our assumption that $\omega_t$ is uniformly comparable to a collapsing product metric, the Gaussian upper heat kernel bounds used in \eqref{e:gaussian} (and in the proof of Proposition \ref{p:upper_bounds}) are true without any (Ricci) curvature conditions on $\omega_t$ or $\omega_Y$; see \cite[Cor 15.25]{Grig}. This seems less clear for the gradient estimates used in the proof of ($2'$) and ($3'$) above.
\end{rk} 

\begin{proposition}\label{p:upper_bounds}
Let $v_i$ be a sequence of smooth $\omega_{t_i}$-subharmonic functions on $\C^m \times Y$ such that $v_i^+ \leq C$. If $v_i^+ \to 0$ in $L^1_{loc}$ with respect to $\omega_{\C^m}^m \wedge \omega_Y^n$ on $\C^m\times Y$, then $\sup_{B_1 \times Y} v_i^+ \to 0$.
\end{proposition}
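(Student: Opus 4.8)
The plan is to mimic the Euclidean argument of \cite[Thm 3.2.13]{H} — a subharmonic function is controlled pointwise from above by its local average — but with the heat semigroup of $(\C^m\times Y,\omega_{t_i})$ in place of mollification, exactly as in the proof of Proposition \ref{l1loc}. Fix the time parameter $\delta=1$. Since $v_i^+=\max\{v_i,0\}$ is bounded and $\omega_{t_i}$-subharmonic (a maximum of subharmonic functions), mollifying on $(\C^m\times Y,\omega_{t_i})$ and invoking property ($1'$) in the limit gives $v_i^+\leq H_{i,1}\ast v_i^+$ pointwise on $\C^m\times Y$; equivalently, one can use $v_i\leq H_{i,1}\ast v_i\leq H_{i,1}\ast v_i^+$ together with $H_{i,1}\ast v_i^+\geq 0$ and positivity of the heat kernel (after a harmless truncation of $v_i$ from below, in case $v_i$ is not bounded below). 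Hence it suffices to prove $\sup_{B_1\times Y}H_{i,1}\ast v_i^+\to 0$.

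Next I would estimate $H_{i,1}\ast v_i^+(x)$ for $x=(z,y)$ with $z\in B_1$ by the same computation that produced \eqref{e:gaussian}: rewrite the convolution integral against the fixed volume form using the Monge-Amp\`ere equation $\omega_{t_i}^{m+n}=ce^{-nt_i}(\omega_{\C^m}^m\wedge\omega_Y^n)$, bound the heat kernel by the Li-Yau Gaussian estimate \cite[Thm 13.4]{li}, and use the uniform equivalence $C^{-1}(\omega_{\C^m}+e^{-t_i}\omega_Y)\leq\omega_{t_i}\leq C(\omega_{\C^m}+e^{-t_i}\omega_Y)$ in two ways: $(\mathrm{i})$ ${\rm dist}_{\omega_{t_i}}(x,x')\geq C^{-1}{\rm dist}_{\omega_{\C^m}}(z,z')$ for $x'=(z',y')$, and $(\mathrm{ii})$ for $i$ large the unit $\omega_{t_i}$-ball about $x$ contains all of $\{z\}\times Y$ (since $\diam_{\omega_{t_i}}(\{z\}\times Y)\to 0$) while projecting into a base ball of radius comparable to $1$, so that $|B_{\omega_{t_i}}(x,1)|_{\omega_{t_i}}\asymp e^{-nt_i}$ uniformly in $x$ and $i$. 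The two factors of $e^{-nt_i}$ then cancel, leaving
\[
H_{i,1}\ast v_i^+(z,y)\;\leq\; C\int_{\C^m\times Y}\exp\!\big(-{\rm dist}_{\omega_{\C^m}}(z,z')^2/C\big)\,v_i^+(z',y')\,(\omega_{\C^m}^m\wedge\omega_Y^n)
\]
for all $z\in B_1$, with $C$ independent of $i$. Splitting the integral into $B_R\times Y$ and its complement, the first piece is $\leq C\|v_i^+\|_{L^1(B_R\times Y,\,\omega_{\C^m}^m\wedge\omega_Y^n)}$, which tends to $0$ as $i\to\infty$ for each fixed $R$ by hypothesis; on the complement ${\rm dist}_{\omega_{\C^m}}(z,z')\geq R-1$ and $v_i^+\leq C$, so the second piece is $\leq C\,\vol(Y,\omega_Y)\int_{R-1}^\infty e^{-\rho^2/C}\rho^{2m-1}\,d\rho$, which tends to $0$ as $R\to\infty$ uniformly in $i$. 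Given $\varepsilon>0$, choosing first $R$ and then $i$ large makes $\sup_{B_1\times Y}H_{i,1}\ast v_i^+<\varepsilon$, which completes the proof.

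The only point needing a little care — consistent with this being the easier of the two propositions — is the uniform volume comparison $|B_{\omega_{t_i}}(x,1)|_{\omega_{t_i}}\asymp e^{-nt_i}$ and its interplay with the Monge-Amp\`ere normalization: one must check that the collapsing factor in the volume of a unit $\omega_{t_i}$-ball is exactly matched by the collapsing factor in $\omega_{t_i}^{m+n}$, which reduces to the fiber $Y$ shrinking while the base directions stay unit-size, both immediate from the two-sided bound on $\omega_{t_i}$. As noted in Remark \ref{rk:grig}, both this volume estimate and the Gaussian heat kernel bound could in fact be obtained from the comparison to the collapsing product metric alone, without using $\Ric(\omega_{t_i})\geq 0$; the curvature lower bound enters here only through property ($1'$).
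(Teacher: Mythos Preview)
Your proof is correct and follows the same strategy as the paper: bound $v_i$ pointwise from above by its heat evolution, apply the Li--Yau Gaussian upper bound \cite[Thm 13.4]{li}, and split the resulting integral into a near piece controlled by $\|v_i^+\|_{L^1}$ and a Gaussian tail. The one organizational difference is that the paper fixes the cutoff radius at $R=2$ and instead lets the heat-time $\delta$ depend on $\|v_i^+\|_{L^1(B_2\times Y)}$ (roughly $\delta\sim|\log\|v_i^+\|_{L^1}|^{-1/2}$), balancing the blow-up $\delta^{-2(m+n)}$ from the Bishop--Gromov lower bound $|B_{\omega_{t_i}}(\cdot,\delta)|_{\omega_{t_i}}\geq C^{-1}e^{-nt_i}\delta^{2(m+n)}$ against the tail $\exp(-1/C\delta^2)$. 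Your choice of fixing $\delta=1$ and varying $R$ is arguably a little cleaner, since at unit scale the volume lower bound follows directly from the metric comparison with the collapsing product, with no need to invoke Bishop--Gromov. One small correction to your closing remark: property~($1'$) is simply the sub-mean-value inequality for the heat flow (the Karp--Li maximum principle \cite[Thm 15.2]{li}) and requires no Ricci lower bound; in both arguments the curvature enters only through the Li--Yau heat kernel estimate (and, in the paper's version, Bishop--Gromov), which as Remark~\ref{rk:grig} notes could likely be avoided altogether.
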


For clarity, let us note that this will be combined with Proposition \ref{l1loc} by setting $v_i = u_i - u$, so that $v_i \to 0$, hence $v_i^+ \to 0$, in $L^{1}_{loc}$. Here $u_i$ is one of the two sequences $|\partial_z|_{\omega_{t_i}}^2$ or $\sum_{j=1}^m |dz^j|_{\omega_{t_i}}^2$.

\begin{proof} Let $H_{i,\delta}$ again denote the heat kernel at time $\delta^2$ associated with $(\C^m\times Y, \omega_{t_i})$. As in the proof of Proposition \ref{l1loc}, using the Karp-Li maximum principle \cite[Thm 15.2]{li} and the Li-Yau Gaussian upper heat kernel bounds \cite[Thm 13.4]{li}, we obtain that
\begin{align}
v_i(x) &\leq \int_{\C^m\times Y} H_{i,\delta}(x,x') v_i(x') \omega_{t_i}(x')^{m+n}\notag\\
&\leq C \int_{\C^m\times Y} |B_{\omega_{t_i}}(x,\delta)|_{\omega_{t_i}}^{-1/2}|B_{\omega_{t_i}}(x',\delta)|_{\omega_{t_i}}^{-1/2}\exp(-{\rm dist}_{\omega_{t_i}}(x,x')^2/5\delta^2) v_i^+(x')\omega_{t_i}(x')^{m+n}.\notag
\end{align}
(The Li-Yau bounds produce a constant $C$ independent of $i$ because ${\rm Ric}(\omega_{t_i}) \geq 0$, but it seems very likely that this actually holds without any curvature conditions on $\omega_{t_i}$ or $\omega_Y$; see Remark \ref{rk:grig}.) We now assume that $\delta \leq 1$, aiming to make $\delta$ sufficiently small depending on $v_i$. Then  Bishop-Gromov volume monotonicity yields that 
$|B_{\omega_{t_i}}(\hat{x}, \delta)|_{\omega_{t_i}} \geq C^{-1}e^{-nt_i} \delta^{2(m+n)}$ for $\hat{x} = x,x'$. Moreover assume that $x \in B_1 \times Y$, and decompose the domain $\C^m \times Y$ into $B_2 \times Y$ and its complement. Using our assumption that $v_i^+ \leq C$, it is then straightforward to deduce that
$$\sup_{B_1 \times Y} v_i^+ \leq C\delta^{-2(m+n)}\int_{B_2 \times Y} v_i^+ (\omega_{\C^m}^m \wedge \omega_Y^n) + C \delta^{2-2(m+n)} \exp(-1/C\delta^2).$$
Assuming that $\|v_i^+\|_{L^1(B_2 \times Y)} \leq C^{-1}$, choose $\delta = |C\log \|v_i^+\|_{L^1(B_2 \times Y)}|^{-1/2}$, where the $L^1$ norms are taken with respect to $\omega_{\C^m}^m \wedge \omega_Y^n$. It follows that $\sup_{B_1 \times Y} v_{i}^+ \to 0$ if $\|v_{i}^+\|_{L^1(B_2 \times Y)} \to 0$.
\end{proof}

\subsection{Parallel vector fields imply splitting} The following proposition deals with Step \ref{6}.

\begin{proposition}
Given that $\omega - (\omega_{\C^m} + \omega_Y)$ is $i\partial\bar\partial$-exact and that $\partial_{z^j}$ and $dz^j$ are $\omega$-parallel for all $j \in \{1,\ldots, m\}$ with $|\partial_{z^j}|^2_\omega = \frac{1}{2}$ and $|dz^j|_{\omega}^2 = 2$, it follows that $\omega = \omega_{\C^m} + \omega_Y$.
\end{proposition}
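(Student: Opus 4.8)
The plan is to extract, from the hypothesis that $\partial_{z^1},\dots,\partial_{z^m}$ and $dz^1,\dots,dz^m$ are $\omega$-parallel, a global holomorphic product structure on $\C^m \times Y$ that is isometric with respect to $\omega$, and then pin down the metric using the $i\partial\bar\partial$-exactness. First I would observe that since the $\partial_{z^j}$ are parallel $(1,0)$-vector fields and $|\partial_{z^j}|_\omega^2 = \tfrac12$ while the Hermitian inner products $\langle \partial_{z^j}, \partial_{z^k}\rangle_\omega$ are parallel functions (holomorphic, being inner products of holomorphic parallel fields, and real-part-bounded, hence constant), the complex span $\mathcal{H} = \mathrm{span}_\C\{\partial_{z^1},\dots,\partial_{z^m}\}$ is a parallel holomorphic distribution on which $\omega$ restricts to a constant Hermitian form; after a constant $\C$-linear change of the $z$-coordinates (which is harmless, or can be absorbed, exactly as $S$ was in Step~\ref{3}) we may assume $\langle\partial_{z^j},\partial_{z^k}\rangle_\omega = \tfrac12\delta_{jk}$, i.e. $\omega_{\C^m}$ is the restriction. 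Dually, the $dz^j$ being parallel means the distribution $\mathcal{V} = \bigcap_j \ker(dz^j)$ (the fiber tangent directions) is also $\omega$-parallel, and $\mathcal{H}\oplus\mathcal{V}$ is the whole tangent bundle with the splitting $\omega$-orthogonal: indeed for $v \in \mathcal{V}$, $\langle \partial_{z^j}, v\rangle_\omega$ is killed because $dz^j = 2\langle\cdot,\partial_{z^j}\rangle_\omega$ up to the normalization (this uses $|dz^j|_\omega^2 = 2 = 4|\partial_{z^j}|_\omega^2$, which is exactly the compatibility making $dz^j$ the metric dual of $\partial_{z^j}$), so $\mathcal{V} = \mathcal{H}^{\perp_\omega}$.

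Next, by the de Rham / Berger holonomy splitting theorem applied to the universal cover, the $\omega$-parallel orthogonal splitting $T(\C^m\times Y) = \mathcal{H}\oplus\mathcal{V}$ together with completeness implies that $(\C^m\times Y,\omega)$ is a Riemannian product $(\C^m, g_{\C^m}) \times (N, g_N)$ for some Ricci-flat Kähler manifold $N$, where the $\mathcal{H}$-factor is flat because it carries the $m$ commuting parallel holomorphic fields $\partial_{z^j}$ and the induced metric is $\omega_{\C^m}$. The holomorphic foliation integrating $\mathcal{V}$ has leaves that are the fibers $\{z\}\times Y$ (these are the integral manifolds of $\ker dz^1\cap\dots\cap\ker dz^m$), so $N$ is biholomorphic and isometric to $(Y,\omega')$ for some Kähler metric $\omega'$ on $Y$, and on $\C^m\times Y$ we get $\omega = \omega_{\C^m} + \omega'$ as a product. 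It remains to identify $\omega'$ with $\omega_Y$: since $\mathrm{Ric}(\omega') = 0$ and, by hypothesis, $\omega - (\omega_{\C^m}+\omega_Y)$ is $i\partial\bar\partial$-exact, restricting to a fiber $\{z\}\times Y$ shows $\omega' - \omega_Y$ is $i\partial\bar\partial$-exact on $Y$; but by Theorem~\ref{liouville}(1) $\omega_Y$ was chosen to be the \emph{unique} Ricci-flat Kähler form in its $i\partial\bar\partial$-class (equivalently, two cohomologous Ricci-flat Kähler forms on a compact manifold coincide, by the equality case of the Calabi uniqueness argument / by $\int_Y (\omega'-\omega_Y)\wedge(\text{stuff})$), hence $\omega' = \omega_Y$ and $\omega = \omega_{\C^m} + \omega_Y$.

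The main obstacle I expect is the step asserting that the fibers of the foliation $\mathcal{V}$ are genuinely copies of $Y$ and that the product decomposition on the universal cover descends correctly to $\C^m\times Y$ itself, rather than to some nontrivial quotient or a product with a \emph{different} Calabi-Yau factor: one must use that $\mathcal{V} = \ker(dz^1)\cap\dots\cap\ker(dz^m)$ with the $z^j$ being the \emph{globally defined} coordinate functions, so the leaves are exactly the level sets $\{z = \text{const}\}$, which are biholomorphic to $Y$ by the very structure of $\C^m\times Y$; thus the de Rham factor $N$ is canonically identified with $Y$. The only subtlety is that a priori the de Rham theorem produces a product metric on a \emph{covering}; but here the $\mathcal{H}$-leaves are the affine subspaces $\C^m\times\{y\}$ (integral manifolds of $\mathcal{H}$, which is spanned by the complete commuting fields $\partial_{z^j}$) and these are already simply connected and globally embedded, so no passage to a cover is needed on the $\C^m$ side, and on the $Y$ side the fiber is compact and the holonomy of the normal bundle is trivial by parallelism — so the global splitting $\omega = \omega_{\C^m}+\omega'$ holds on the nose. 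With that in hand, the cohomological identification $\omega'=\omega_Y$ is routine.
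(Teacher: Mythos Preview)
Your argument is correct and in fact more direct than the paper's. The key difference is how you obtain the $\omega$-orthogonality of $\mathcal{H} = \mathrm{span}\{\partial_{z^j}\}$ and $\mathcal{V} = TY$. You extract this immediately from the norm hypotheses: since $|dz^j(\partial_{z^j})| = 1 = |dz^j|_\omega|\partial_{z^j}|_\omega$, the Cauchy-Schwarz inequality is saturated, so $dz^j$ is a scalar multiple of the Hermitian dual of $\partial_{z^j}$, forcing $\partial_{z^j}\perp_\omega \ker dz^j \supset TY$; this also gives $\langle\partial_{z^j},\partial_{z^k}\rangle_\omega=\tfrac12\delta_{jk}$ automatically, so your proposed linear change of the $z$-coordinates is unnecessary (and, as stated, would actually disturb the norm hypotheses). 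The paper, by contrast, does \emph{not} exploit the numerical values $\tfrac12$ and $2$ to get orthogonality. It passes to the universal cover, looks at the full flat de Rham factor $\tilde P = \tilde P_{\C^m}\oplus\tilde P_Y$, and allows the two summands to be \emph{non}-orthogonal with respect to $\tilde\omega$; the defect is encoded by a linear map $\ell:\C^m\to H^{0,1}(Y)$ so that $T_\ell^*\omega = c_\ell\omega_{\C^m}+\omega_Y$, and then $\ell=0$ is forced by the uniqueness clause of Theorem~\ref{liouville}(2), since $\omega$ was already assumed $i\partial\bar\partial$-cohomologous to $\omega_{\C^m}+\omega_Y$. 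Your route is self-contained and uses only Calabi--Yau uniqueness on $Y$; the paper's route is more structural and ties the proposition back into the $T_\ell$-formalism of the main theorem. Both reach the same conclusion.
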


\begin{proof}
Let $\tilde\omega$ be the pullback of $\omega$ to $\C^m \times \tilde{Y}$, where $\tilde{Y}$ is the universal cover of $Y$. Let $\tilde{P}$ be the flat de Rham factor of $(\C^m \times \tilde{Y}, \tilde{\omega})$ spanned by the pullbacks of all $\omega$-parallel vector fields on $\C^m \times Y$. Then $\tilde{P} = \tilde{P}_{\C^m} \oplus \tilde{P}_Y$, where $\tilde{P}_{\C^m}$ is spanned by the pullbacks of $\partial_{z^1}, \ldots, \partial_{z^m}$ and $\tilde{P}_Y$ is the space of lifts of all $\omega_Y$-parallel vector fields on $Y$. (Here we have used the fact that $\nabla^\omega (dz^j) = 0$ for all $j$, which implies that the fibers $\{z\}\times Y$ are $\omega$-totally geodesic and form an $\omega$-parallel family, and also that ${\rm Ric}(\omega|_{\{z\} \times Y}) = 0$, hence $\omega|_{\{z\}\times Y} = \omega_Y$.) The main point to observe is that \begin{small}$\tilde{P}_{\C^m}$\end{small} is not a priori orthogonal to \begin{small}$\tilde{P}_Y$\end{small} with respect to the Euclidean metric $\tilde\omega|_{\tilde{P}}$, but the orthogonal complement of \begin{small}$\tilde{P}_Y$\end{small} is the graph of a unique complex linear map $\ell^\sharp: \tilde{P}_{\C^m} \to \tilde{P}_Y$. Then the splitting $\tilde{P} = \tilde{P}_{\C^m} \oplus \tilde{P}_Y$ is orthogonal with respect to $T_\ell^*\tilde\omega$, and $T_\ell^*\omega = c_\ell \omega_{\C^m} + \omega_Y$ for some constant $c_\ell$. However, since $\omega$ is $i\partial\bar\partial$-cohomologous to $\omega_{\C^m} + \omega_Y$ by assumption, and $c_\ell\omega_{\C^m}$ is $i\partial\bar\partial$-cohomologous to $\omega_{\C^m}$, it follows from the uniqueness statement of Theorem \ref{liouville}(2) that $\ell = 0$.
\end{proof}

This finishes Step \ref{6} and the proof  of Theorem \ref{liouville}. We conclude this paper with a remark on the automorphisms $T_\ell$. After reading the statement of Theorem \ref{liouville}(1) one may be tempted to guess that $\omega$ is always equal to a product K\"ahler form $S^*\omega_{\C^m} + \omega_Y$. We have proved that this is  true if one replaces $\omega$ by $T_\ell^*\omega$ for a uniquely defined complex linear map $\ell: \C^m \to H^{0,1}(Y)$, where $T_\ell = {\rm id}$ if and only if $\ell = 0$. This still leaves the possibility that $\ell$ might always vanish. Now Examples \ref{bad_example}--\ref{bad_example_contd} show that $\ell \neq 0$ in general, and that this obstructs not only the conclusion that $\omega$ is a product form but also a key technical step of its proof (the statement of Proposition \ref{weaktrace}). Our final remark says that \emph{every} nonzero linear map $\ell$ can be used to generate a counterexample in this way.

\begin{rk}
Given a complex linear map $\ell: \C^m \to H^{0,1}(Y)$, the K\"ahler metric $T_\ell^*(\omega_{\C^m} + \omega_Y)$ is Ricci-flat with the same volume form as the product metric, and is parallel with respect to it. The vector fields $\partial_{z^j}$ are still parallel with respect to $T_\ell^*(\omega_{\C^m} + \omega_Y)$, but they do not define an isometric product splitting unless $\ell = 0$; instead one needs to use the parallel vector fields $(\partial_{z^j}, - \frac{\partial}{\partial z^j} \ell^\sharp)$.
\end{rk}

 \end{document}